\documentclass[11pt,letterpaper]{amsart}

\usepackage[T1]{fontenc}
\usepackage{microtype}
\usepackage{amsmath,amssymb,amsthm,mathtools}
\usepackage{enumitem}
\usepackage{aliascnt}
\usepackage{booktabs,tabularx,array}
\usepackage[hidelinks]{hyperref}
\usepackage[nameinlink,capitalise]{cleveref}
\allowdisplaybreaks

\hypersetup{
  pdftitle={Repairing the refined-decoupling proof of the 5/4 planar pinned Falconer theorem},
  pdfauthor={Shalender Singh and Vishnu Priya Singh Parmar},
  pdfsubject={A counterexample to the printed arbitrary-packet formulation and a complete repair},
  pdfkeywords={Falconer distance problem, pinned distance sets, refined decoupling, activity tubes, wave packets, local constancy}
}

\title[Repairing the planar pinned Falconer proof]{Repairing the refined-decoupling proof of the 5/4 planar pinned Falconer theorem}
\author{Shalender Singh}
\address{Independent Researcher, Milpitas, California, USA}
\email{singhshalender@gmail.com}
\author{Vishnu Priya Singh Parmar}
\address{Independent Researcher, Milpitas, California, USA}
\thanks{Both authors are independent researchers in Milpitas, California, USA. Corresponding author: Shalender Singh, \texttt{singhshalender@gmail.com}. ORCID: Shalender Singh, 0000-0003-3677-4043; Vishnu Priya Singh Parmar, 0009-0007-1130-4551.}
\date{August 27, 2026}

\subjclass[2020]{42B37, 28A75, 42B10}
\keywords{Falconer distance problem, pinned distance sets, refined decoupling, activity tubes, wave packets, local constancy}

\newtheorem{theorem}{Theorem}[section]
\newaliascnt{proposition}{theorem}
\newtheorem{proposition}[proposition]{Proposition}
\aliascntresetthe{proposition}
\newaliascnt{lemma}{theorem}
\newtheorem{lemma}[lemma]{Lemma}
\aliascntresetthe{lemma}
\newaliascnt{corollary}{theorem}
\newtheorem{corollary}[corollary]{Corollary}
\aliascntresetthe{corollary}
\newaliascnt{remark}{theorem}
\newtheorem{remark}[remark]{Remark}
\aliascntresetthe{remark}
\newaliascnt{observation}{theorem}

\aliascntresetthe{observation}

\newcommand{\R}{\mathbb{R}}
\newcommand{\supp}{\operatorname{supp}}
\newcommand{\dist}{\operatorname{dist}}
\newcommand{\RapDec}{\operatorname{RapDec}}
\newcommand{\cN}{\mathcal{N}}
\newcommand{\wh}{\widehat}
\newcommand{\eps}{\varepsilon}
\newcommand{\cW}{\mathcal{W}}

\newcommand{\cQ}{\mathcal{Q}}

\newcommand{\cS}{\mathcal{S}}
\newcommand{\abs}[1]{\lvert#1\rvert}
\newcommand{\norm}[1]{\lVert#1\rVert}

\begin{document}

\begin{abstract}
We show that the literal arbitrary-packet form of the refined-decoupling
estimate printed in the Guth--Iosevich--Ou--Wang proof of the planar pinned
Falconer theorem is false, even after its tube dimensions are normalized.
An explicit collar construction gives a fixed-power counterexample: all
packets are active on one square while none of their smaller labelled tubes
meets that square.  We then give a non-circular repair of the original proof
route.  The analytic input is an enlargement-stable arbitrary-packet theorem
in which a packet is concentrated on an \(a\)-dilate and multiplicity is
counted with a strictly larger \(b\)-dilate.  We prove this theorem directly
from weighted \(\ell^2\) decoupling by an induction that tracks the dilation
margin through parabolic rescaling.

The corrected theorem applies directly to the original Falconer parent
packets after an exact frequency truncation and an absolute small-packet
cutoff; no parent-to-canonical decomposition is needed.  We then rebuild the
good-tube incidence estimate, retain the neighborhood forced by local
constancy, and supply a uniform regularization and limiting argument.  The
principal frequency exponent remains \(-(\alpha+1)/3\), so the energy
argument closes exactly for \(\alpha>5/4\).  The pinned theorem itself is not
contradicted and is also known through later microlocal methods.  A later
canonical wave-packet treatment of refined decoupling overlaps with the
activity-tube viewpoint but not with the counterexample or the repaired
Falconer proof chain.
\end{abstract}

\maketitle

\section{Introduction}

For a compact set \(E\subset\R^2\) and a point \(x\in\R^2\), write
\[
  \Delta_x(E):=\{\abs{x-y}:y\in E\}.
\]
Guth, Iosevich, Ou and Wang proved that
\[
 \dim_{\mathrm H}E>\frac54
 \quad\Longrightarrow\quad
 \abs{\Delta_x(E)}>0\ \text{for some }x\in E
\]
\cite{GIOW}.  Their proof separates a Frostman measure into a good
wave-packet part and a bad train-track part.  The bad part is controlled by
Orponen's radial-projection theorem \cite{Orponen}; the good part uses Liu's
pinned-distance identity \cite{Liu} and a refined-decoupling estimate.

This paper concerns the correctness of that particular proof route.  It does
not claim that the pinned theorem was otherwise unavailable.  Iosevich, Liu
and Xi subsequently proved the corresponding result for two-dimensional
Riemannian manifolds, which supplies a separately published microlocal route
and includes the Euclidean theorem as a local special case \cite{ILX}.
Carbery, Li, Pang and Yung later gave a rigorous canonical Fourier-series
wave-packet treatment of refined decoupling for well-curved curves
\cite{CLPY}.  Their packets are counted on the enlarged tiles on which they
are physically active.  That work overlaps with the canonical
activity-tube viewpoint, but it does not give the collar counterexample
below or repair the incidence, local-constancy and limiting steps of the
original Falconer argument.

The contribution here is therefore specific.  We identify a false literal
arbitrary-packet formulation, give an explicit fixed-power counterexample,
prove a corrected enlargement-stable arbitrary-packet theorem, and use it
to close the original Falconer proof chain.  To the best of our knowledge,
this combination of counterexample and complete correction has not
previously appeared.

\subsection*{Three main results}

The first result isolates the failure independently of the later Falconer
application.

\begin{theorem}[Failure of the printed arbitrary-packet formulation]
\label{thm:false-formulation}
Correct the tube dimensions in the printed refined-decoupling statement to
length \(R\) and radius \(R^{1/2+\delta}\), but retain the convention that a
packet labelled by \(T\) may be essentially supported in \(2T\) while
multiplicity counts cubes meeting \(T\).  Then the asserted estimate is
false for \(p=6\), even with an \(R^\eps\) loss.
\end{theorem}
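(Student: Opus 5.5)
The plan is to build a configuration in which the packets carry essentially all of their \(L^6\) mass on the collar \(2T\setminus T\), while the union \(Y\) of \(R^{1/2}\)-squares on which we measure \(f\) meets none of the labelled tubes \(T\). Then the multiplicity parameter is degenerate (zero, or bounded by a fixed constant once we adjoin a harmless auxiliary packet), but the left-hand side is large. The tube dimensions are fixed throughout as length \(R\) and radius \(r:=R^{1/2+\delta}\), and \(p=6\).

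First I fix a square \(Q_0\) of side \(R^{1/2}\) at the origin. For each of \(N\approx R^{1/2}\) directions \(\theta\) (the \(R^{-1/2}\)-separated caps of the parabola or unit circle), I take a tube \(T_\theta\) of radius \(r\) pointing in direction \(\theta\). I translate it transversally by about \(\tfrac32 r\), so that \(T_\theta\cap Q_0=\emptyset\) but \(Q_0\subset 2T_\theta\). I choose a smooth bump \(f_\theta\), with Fourier support in the \(\theta\)-cap of the \(R^{-1}\)-neighbourhood, which is \(\approx 1\) in modulus on \(2T_\theta\) and decays rapidly off it. This is allowed, since the convention only requires essential support in \(2T\). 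I modulate each \(f_\theta\) so that all phases agree at the centre of \(Q_0\). The collar is wide enough that each \(f_\theta\) is nonnegligible on all of \(Q_0\), and in fact on a whole neighbourhood \(Y\) of \(Q_0\) consisting of many \(R^{1/2}\)-squares that meet no \(T_\theta\). The \(L^p\) norms of the \(f_\theta\) are all comparable by symmetry, so the dyadic-constancy hypothesis of the printed statement holds.

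Second, I estimate both sides. On \(Y\), the \(L^2\) orthogonality of the distinct caps, followed by H\"older's inequality from \(L^2\) to \(L^6\) on each square (or, better, the constructive interference near the centre together with local constancy at scale \(1\)), gives a lower bound for \(\norm{f}_{L^6(Y)}\) of fixed polynomial size in \(R\). Since no square of \(Y\) meets any \(T_\theta\), the multiplicity is \(M=0\), and the printed right-hand side \(R^\eps (M/\abs{\cW})^{1/3}(\sum\norm{f_T}_2^2)^{1/2}\) vanishes. This already contradicts the estimate for every \(\eps\).

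The main obstacle is robustness: the statement may implicitly require \(M\ge 1\), or may be stated only for squares meeting some tube. To handle this I add one auxiliary packet \(T_*\) far from \(Y\), with \(\norm{f_{T_*}}\) chosen comparable to the others, and let one additional square meet it. Then \(M=1\) and \(\abs{\cW}=N+1\). I then optimise over \(N\) and over the number of collar squares in \(Y\) so that the ratio of the two sides is at least \(R^{c}\) for a fixed \(c=c(\delta)>0\). This ratio bound is what ``fixed-power'' means in the theorem, and it beats any \(R^\eps\) loss with \(\eps<c\). The bookkeeping of the \(L^6\) lower bound on the collar squares, uniformly over the \(N\) directions, is the step I expect to require the most care. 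I would first try to handle it by restricting to a single \(R^{1/2}\)-square near \(Q_0\), where all packets are simultaneously of modulus \(\approx 1\), and exploiting the phase alignment there.
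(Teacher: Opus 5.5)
Your construction is essentially the paper's collar counterexample. It uses \(N\approx R^{1/2}\) packets whose labelled tubes are offset by \(\tfrac32R^{1/2+\delta}\), so that an \(R^{1/2}\)-square at the origin lies in every \(2T\setminus T\), with phases aligned on a unit disc there so that \(\norm{f}_{L^6(Q_0)}\gtrsim N\). Two small points: the auxiliary packet is unnecessary, because ``at most \(M\)'' already holds with \(M=1\) when no tube meets \(Q_0\); and the comparison you defer is immediate, since each \(\norm{f_\theta}_6\approx R^{1/4+\delta/6}\) gives a right side \(\lesssim R^{\eps}N^{-1/3}(NR^{1/2+\delta/3})^{1/2}\approx R^{1/3+\delta/6+\eps}\), which is smaller than \(R^{1/2}\) as soon as \(\eps+\delta/6<1/6\).
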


The proof is the explicit construction in
\cref{prop:intended-counterexample}.  Its two sides have orders
\(R^{1/2}\) and \(R^{1/3+\delta/6+\eps}\), respectively.

For the correction, let \(g\) belong to the normalized phase family of
\cref{app:normalized-family}.  At scale \(R\), let \(I\) range over a
translate of a lattice cover by intervals of length \(R^{-1/2}\), with
centre spacing \(\sigma_0R^{-1/2}\), \(1/2\le\sigma_0\le1\).  Put
\begin{align}
 \theta_I^\Lambda
 &=\{\xi:\xi_1\in32I,\
       \abs{\xi_2-g(\xi_1)}\le4\Lambda R^{-1}\},
 \label{eq:intro-cap}\\
 T_{I,v}
 &=\{X\in[-25R,25R]\times[-R,R]:
       \abs{X_1+g'(c_I)X_2-v}\le\Lambda R^{1/2}\}.
 \label{eq:intro-strip}
\end{align}

\begin{theorem}[Enlargement-stable arbitrary-packet refined decoupling]
\label{thm:enlargement-stable}
Fix \(2\le p\le6\) and \(\epsilon>0\).  There is
\(\vartheta_0(\epsilon)>0\) such that the following holds whenever
\(0<\vartheta\le\vartheta_0(\epsilon)\),
\(R^{\vartheta/2}\le\Lambda\le R^{2\vartheta}\),
\(1\le a\le10\), and \(a+\tfrac12\le b\le20\).
Let \(\cW\) be a finite family of strips \(T=T_{I,v}\), with at most
fourfold overlap for each fixed label \(I\), and let
\[
 F=\sum_{T\in\cW}F_T,\qquad
 \supp\wh F_T\subset\theta_I^\Lambda,\qquad
 \norm{F_T}_{L^p(\R^2\setminus aT)}
 \le R^{-100}\norm{F_T}_p.
\]
Assume that the packet norms are comparable within a factor two.  If \(Y\)
is a union of \(R^{1/2}\)-squares and every such square meets at most \(M\)
of the enlarged strips \(bT\), then
\begin{equation}\label{eq:enlargement-stable}
 \norm{F}_{L^p(Y)}
 \lesssim_{\epsilon,\vartheta,p}
 R^\epsilon
 \left(\frac{M}{\abs{\cW}}\right)^{\frac12-\frac1p}
 \left(\sum_{T\in\cW}\norm{F_T}_p^2\right)^{1/2}.
\end{equation}
\end{theorem}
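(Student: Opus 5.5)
We will follow the Guth--Iosevich--Ou--Wang induction-on-scales scheme for refined decoupling. The new ingredient is that we carry the pair of dilation parameters \((a,b)\) through every step and spend a small, controlled amount of the margin \(b-a\ge\tfrac12\) at each level of the induction. Fix \(K=R^{\vartheta'}\) with \(\vartheta'\) small depending on \(\epsilon\). The induction hypothesis is the estimate \eqref{eq:enlargement-stable} at all scales \(R'\le R/K\), for all admissible dilation pairs \((a',b')\) with \(b'-a'\) at least some \(\tfrac12-\eta(R')\). Here \(\eta\) increases slowly and stays below \(\tfrac14\) over the \(O(1/\vartheta')\) levels of the induction. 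The base case \(R\lesssim_\epsilon 1\) is trivial. We also reduce to packets of comparable norm; dyadic pigeonholing gives this at a \(\log R\) cost, and in any case it is assumed.

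\textbf{Step 1 (pass to intermediate scale).} Group the caps \(I\) into caps \(J\) of length \(K^{-1/2}\), and cover \(\R^2\) by squares \(Q\) of side \(R/K\). On each \(Q\), parabolic rescaling of the \(J\)-cap maps the pieces \(F_T\) with \(I\subset J\), restricted near \(Q\), to a scale-\(R/K\) problem. In that problem each strip \(T\cap Q\) becomes a rescaled strip of length \(R/K\) and width \((R/K)^{1/2}\Lambda'\).

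The key check happens here. The concentration hypothesis \(\norm{F_T}_{L^p(\R^2\setminus aT)}\le R^{-100}\norm{F_T}_p\) must transfer to the rescaled packets with a dilation \(a'\le a+O(K^{-1/2}\Lambda)\). This uses the fact that \(\Lambda\le R^{2\vartheta}\) controls the extra width coming from the curvature of \(g\) over \(J\) and from the frequency tolerance \(4\Lambda R^{-1}\). Likewise, a square meeting \(bT\) pulls back to one meeting \(b'T'\) with \(b'\ge b-O(K^{-1/2}\Lambda)\). The margin therefore shrinks by at most \(K^{-1/2}R^{2\vartheta}\), which is summable over the levels once \(\vartheta\le\vartheta_0\ll\vartheta'\). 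We also have to handle tubes \(T\) that only partially cross \(Q\). For these we re-express \(F_T\mathbf 1_{Q}\) (smoothly cut off) as a sum of \(O(1)\) rescaled packets. We then re-pigeonhole their norms, which is where the fourfold-overlap hypothesis enters.

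\textbf{Step 2 (apply induction and decoupling).} On each \(Q\), apply the induction hypothesis to each \(F_J=\sum_{I\subset J}F_T\) with the multiplicity \(M_Q\) of the rescaled strips. Then combine the \(J\)-pieces by weighted \(\ell^2\) decoupling at scale \(K\) on the \(K^{1/2}\times K\)-boxes, in the form of the standard bilinear-reduced broad/narrow decomposition of GIOW. In that decomposition a fat tube of width \(K^{1/2}R^{1/2}\Lambda\) in direction \(J\) contains the relevant strips \(bT\). Finally, sum over \(Q\) using Hölder in the counts \(M_Q\), \(\abs{\cW_Q}\). The identity \(\sum_Q\abs{\cW_Q}\approx K\abs{\cW}\) holds because each tube crosses about \(K\) squares \(Q\), and \(M_Q\le M\) holds because \(b'\)-dilates sit inside \(b\)-dilates. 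Together these reproduce the factor \((M/\abs{\cW})^{1/2-1/p}\) with a loss \(C_\epsilon K^{\epsilon/2}\), which closes the induction.

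\textbf{Main obstacle.} We expect the hardest part to be Step 1's geometric bookkeeping: showing rigorously that after rescaling, the "concentrated on \(aT\)" hypothesis holds with \(a'<b'\) and with a margin that degrades only by \(o(1)\) in total. The counterexample of \cref{thm:false-formulation} shows that this margin cannot simply be discarded. Our first attempt will be explicit: compute the image of \(T_{I,v}\cap Q\) under the affine rescaling for \(J\), compare it with \(T_{I',v'}\) at scale \(R/K\), and absorb the error using \(\Lambda\ge R^{\vartheta/2}\). That lower bound makes Schwartz tails of size \(R^{-100}\) negligible relative to an enlargement by a factor \(1+\Lambda^{-1/2}\).
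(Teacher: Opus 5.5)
There is a genuine gap: Steps 1--2 do not produce smaller-scale instances of the theorem, and the multiplicity relation that makes refined decoupling close is missing.

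\textbf{The rescaling domain and the role of \(\Lambda\).} The parabolic rescaling attached to a cap \(J\) of length \(K^{-1/2}\) maps a \emph{cylinder} adapted to \(J\), of width about \(RK^{-1/2}\) and full length \(R\), onto the scale-\(R/K\) box. It does not do this for an \(R/K\)-square: such a square goes to a much smaller region, and \(T\cap Q\) becomes a piece of length about \(R/K^2\), not a scale-\(R/K\) strip. Cutting \(F_T\) off smoothly to \(Q\) also destroys the exact support \(\supp\wh F_T\subset\theta_I^\Lambda\) that the induction hypothesis requires.

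The paper, following GIOW with \(K=R^{1/2}\), never cuts packets in space. It assigns whole tubes to cylinders \(\Box_{J,u}\) of width \(R^{3/4}\) and length \(R\), and sets \(F_\Box=\sum_{T\in\cW_\Box}F_T\). By \cref{lem:appendix-rescaling}, each \(aT\) then maps exactly onto \(aT'\), with the same \(a\) and the same \(\Lambda\). So the concentration hypothesis transfers with no loss, and your ``main obstacle'' is not where the margin is spent.

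Because \(\Lambda\) is invariant under rescaling, the constraint \(\Lambda\le R'^{2\vartheta}\) fails at lower scales. You therefore cannot induct on the theorem as stated. The paper instead works with \(\mathfrak A(R,\Lambda,a,b)\) for free \(\Lambda\). It stops when \(\Lambda\gtrsim R_k^{1/4}\), using the trivial bound \(\mathfrak A\lesssim\Lambda\) rather than a base case \(R\lesssim1\). It also tracks a \(\Lambda^{1/2}\) loss per step, which comes from the decoupling-weight neighborhood; this is why \(\vartheta_0(\epsilon)\) is needed. Your per-step loss \(C_\epsilon K^{\epsilon/2}\) omits this.

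\textbf{The missing multiplicity relation.} The bounds \(M_Q\le M\) and \(\sum_Q\abs{\cW_Q}\approx K\abs{\cW}\) cannot reproduce \((M/\abs{\cW})^{1/2-1/p}\). After weighted decoupling on an \(R^{1/2}\)-square \(Q\) and H\"older over the \(n\) contributing cylinder pieces, one pays \(n^{1/2-1/p}\). The induction closes only because of three facts:
\begin{itemize}
\item these \(n\) cylinders carry disjoint tube families;
\item each family has \(\sim M'\) tubes counted on a rescaled cell \(C\), an \(R^{1/2}\times R^{3/4}\) parallelogram within distance \(\Lambda^{1/4}R^{1/2}\) of \(Q\);
\item every such tube has its \(b\)-dilate meeting \(Q\).
\end{itemize}
Together these give \(nM'\lesssim\Lambda^{1/2}M\), as in \eqref{eq:appendix-nM}.

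The dilation margin is spent exactly in this transfer. \Cref{lem:appendix-transfer} shows that \(b'T\cap C\ne\varnothing\) forces the centre of \(Q\) into \((b'+12\Lambda^{-3/4})T\). The reason is that the weight neighborhood and the slope drift along \(C\) are \(O(\Lambda^{1/4}R^{1/2})\), which is small compared with the strip width \(\Lambda R^{1/2}\).

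Without this transfer, the per-\(Q\) estimate you would need is the full theorem on a smaller ball, with the original caps and \(\Lambda R^{1/2}\)-wide strips. That is not a smaller-scale instance, so the argument is circular. Note also that no broad/narrow decomposition is involved: \(\ell^2\) decoupling is applied directly on each \(R^{1/2}\)-square, before the induction hypothesis is applied to the cylinder pieces.
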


Appendix~\ref{app:refined-decoupling} proves
\cref{thm:enlargement-stable} from weighted \(\ell^2\) decoupling.  The
supporting and counted strips remain distinct at every induction scale.

The final result repairs the original pinned-distance route.

\begin{theorem}[Repair of the planar pinned Falconer proof]
\label{thm:main}
Let \(E\subset\R^2\) be compact and let
\(5/4<\alpha<\dim_{\mathrm H}E\).  With the corrected activity and goodness
conventions below, for every \(\eps>0\) and \(N>0\),
\begin{equation}\label{eq:main-frequency}
\begin{aligned}
 \int_{E_2}\abs{\mu_{1,\mathrm{good}}*\wh{\sigma_r}(x)}^2\,d\mu_2(x)
 &\lesssim_{\eps,\alpha,R_0}
 r^{-\frac{\alpha+1}{3}+\eps}\,r^{-1}
 \int_{\R^2}\abs{\wh\mu_1(\xi)}^2\psi_r(\xi)\,d\xi\\
 &\quad+C_Nr^{-N}.
\end{aligned}
\end{equation}
Here \(\sigma_r\) is normalized arc length on \(S_r^1\), and \(\psi_r\)
is comparable to one on the unit neighborhood of \(S_r^1\) and rapidly
decreasing away from it.  Consequently there exists \(x\in E\) such that
\(\abs{\Delta_x(E)}>0\).
\end{theorem}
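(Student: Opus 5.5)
The plan follows the Guth--Iosevich--Ou--Wang route, with \cref{thm:enlargement-stable} substituted for the printed refined-decoupling estimate. The first task is to set up the corrected conventions.

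We fix Frostman measures \(\mu_1,\mu_2\) of exponent \(\alpha\) on separated pieces \(E_1,E_2\subset E\). We decompose \(\mu_1\) dyadically in frequency, \(\mu_1=\sum_j\mu_1*\phi_j\). At scale \(R_j=2^j\) we expand each piece into parent wave packets \(M_T\mu_1\), using an exact frequency truncation to caps \(\theta_I^\Lambda\) as in \eqref{eq:intro-cap}, with \(\Lambda=R_j^{\vartheta}\). A packet is declared \emph{active} on an \(a\)-dilate \(aT\), where it is concentrated up to \(R^{-100}\) tails. Goodness is then defined by counting \(\mu_2\)-mass, and Orponen's theorem is applied, on the strictly larger dilate \(bT\) with \(b\ge a+\tfrac12\). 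Packets whose norm falls below an absolute threshold, say \(R^{-N'}\), are discarded into the \(C_Nr^{-N}\) error. The bad part is handled exactly as in \cite{GIOW}, via Orponen's radial-projection theorem \cite{Orponen} applied to the \(bT\)-tubes; the enlargement only costs constants in the train-track counting.

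For the good part we use Liu's identity \cite{Liu} to reduce \eqref{eq:main-frequency} to an \(L^2(\mu_2)\) bound for \(\mu_{1,\mathrm{good}}*\wh{\sigma_r}\). Localized to \(R\)-balls, this function is a sum of good packets with Fourier support near the circle, which is a rescaling of the normalized parabola family of \cref{app:normalized-family}. By local constancy at scale \(R^{1/2}\), \(\int\abs{f}^2\,d\mu_2\) is bounded by an average of \(\abs{f}^2\) over the \(R^{1/2}\)-neighbourhood of \(\supp\mu_2\). This neighbourhood must be retained: we count the squares \(Q\) of that neighbourhood rather than points.

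We then pigeonhole dyadically in three quantities: packet norm (making norms comparable within a factor two), the number \(M\) of strips \(bT\) meeting a square, and the resulting set \(Y\) of \(R^{1/2}\)-squares. Then we apply Hölder from \(L^2\) to \(L^6\) on \(Y\), followed by \cref{thm:enlargement-stable} with \(p=6\), which gives
\[
\norm{f}_{L^6(Y)}\lesssim R^{\epsilon}\Bigl(\tfrac{M}{\abs{\cW}}\Bigr)^{1/3}\Bigl(\sum\norm{F_T}_6^2\Bigr)^{1/2}.
\]
Next we bound \(\abs{Y}\) by Frostman: each square carries \(\mu_2\)-mass \(\lesssim R^{-\alpha/2}\) after normalization. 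We bound \(M\) via goodness: each good \(bT\) carries \(\mu_2\)-mass \(\lesssim R^{-1/2+\delta}\), so the incidences \(M\abs{Y}\lesssim\abs{\cW}\cdot(\text{squares per tube})\). Finally, \(L^2\) orthogonality of the packets converts \(\sum\norm{F_T}_2^2\) into \(\int\abs{\wh\mu_1}^2\psi_r\). Optimizing yields the exponent \(r^{-(\alpha+1)/3+\eps}\cdot r^{-1}\), as in \cite{GIOW}.

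The final step, passing to \(\abs{\Delta_x(E)}>0\), is the standard energy argument. We integrate \eqref{eq:main-frequency} over \(r\) and use \(\int\abs{\wh\mu_1}^2\abs{\xi}^{\alpha-2+\ldots}\,d\xi<\infty\). The exponent check is \(\frac{\alpha+1}{3}+1>2-\alpha+\ldots\), i.e.\ \(\alpha>5/4\). This bounds the \(L^2\) norm of the pushforward density of \(\mu_{1,\mathrm{good}}\) under \(y\mapsto\abs{x-y}\), averaged in \(x\). Since \(\mu_{1,\mathrm{bad}}\) has small mass, the pushforward of \(\mu_1\) has an absolutely continuous part for \(\mu_2\)-a.e.\ \(x\) in a positive-measure set.

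The limiting argument is needed because the good/bad splitting depends on \(x\). We regularize \(\mu_1\) at scale \(R_0\), make all constants uniform in the regularization, and pass to weak limits using lower semicontinuity of the \(L^2\) norm of the pushforwards.

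I expect the main obstacle to be verifying that the parent packets really satisfy the hypotheses of \cref{thm:enlargement-stable}. Three points are at issue: exact Fourier support in \(\theta_I^\Lambda\) after truncation, the \(R^{-100}\) concentration on \(aT\) with a margin \(b-a\ge1/2\) surviving the circle-to-parabola rescaling, and bounded overlap per label. My first attempt would use a Schwartz partition in physical space with compactly supported Fourier multipliers. I would absorb the resulting tails into the small-packet cutoff, and check that the \(\Lambda\)-enlargement loses only \(R^{O(\vartheta)}\le R^{\eps}\).
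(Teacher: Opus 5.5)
Your outline follows the paper's architecture: exact frequency truncation of the parent packets, an absolute small-packet cutoff that turns absolute tails into the relative tails required by \cref{thm:enlargement-stable}, Orponen's theorem on enlarged strips, refined decoupling at $p=6$, orthogonality, and a regularization limit. The gap is in the step that produces the exponent.

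\textbf{Local constancy is used at the wrong scale.} You invoke local constancy at scale $R^{1/2}$ and then Frostman on $R^{1/2}$-squares. In your normalization, $f=\mu_{1,\mathrm{good}}*\wh{\sigma_r}$ has frequencies of size $\sim1$ (size $\sim r$ at unit scale). The cross terms $f_T\overline{f_{T'}}$ oscillate at that frequency, and $\mu_2$ may sit on their peaks. So $\abs{f}^2$ is locally constant only at scale $1$, i.e.\ $r^{-1}$ at unit scale. What is true is
$\int_Y\abs{f}^2\,d\mu_2\lesssim\int_{Y^+}\abs{f}^2(\mu_2*\Phi_r)+r^{-K}\norm{f}_2^2$,
with $\Phi_r$ at scale $r^{-1}$ and $Y^+=\cN_{r^{-3/4}}(Y)$. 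This is followed by H\"older with exponents $3$ and $3/2$ against the density:
$\norm{\mu_2*\Phi_r}_{L^{3/2}(Y^+)}\le\norm{\mu_2*\Phi_r}_\infty^{1/3}\bigl(\int_{Y^+}\mu_2*\Phi_r\bigr)^{2/3}\lesssim r^{(2-\alpha)/3}\bigl(\mu_2(\cN_{2s}(Y))+r^{-K}\bigr)^{2/3}$.
Here the $L^\infty$ bound is Frostman at scale $r^{-1}$.

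\textbf{The incidence bound must control mass, not square counts.} The argument must bound the $\mu_2$-\emph{mass} of the enlarged set, $M\,\mu_2(\cN_{5s}(Y_M))\lesssim W\,r^{-1/2+c_\alpha\delta}$. Your ``bound $\abs{Y}$ by Frostman'' runs the wrong way: a per-square upper bound on mass bounds the number of squares from below. This mass bound also forces goodness onto a strip strictly containing the counting dilate together with these square neighbourhoods, not onto $bT$ itself. In the paper, $T^{\mathrm{good}}\supset200q$ whenever $100q$ meets $3T$.

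\textbf{Your mechanism gives a different exponent.} Run consistently, with square masses pigeonholed at $\beta\le r^{-\alpha/2}$, your $R^{1/2}$-scale bookkeeping outputs $r^{-(\alpha+4)/6}$ and hence a threshold $\alpha>8/7$. That apparent gain over $5/4$ is a symptom of the invalid step. So ``optimizing yields $-(\alpha+1)/3$'' is not what your mechanism actually gives.

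Three further points need fixing.
\begin{itemize}
\item Refined decoupling outputs $\sum_T\norm{\widetilde f_T}_6^2$, not $\sum_T\norm{F_T}_2^2$. Before orthogonality you need the per-packet bound $\norm{\widetilde f_T}_6^2\lesssim r^{-2/3+\delta/3}A_T^2$, where $A_T=\norm{\wh{M_T\mu_1}}_{L^2(d\sigma_r)}$. It comes from $\norm{f_T}_\infty\lesssim r^{-1/4}A_T$ and the area $O(r^{-1/2+\delta})$ of $2.1T$. The exponent is then exactly $r^{(2-\alpha)/3}\cdot r^{-1/3}\cdot r^{-2/3}=r^{-(\alpha+1)/3}$.
\item Your exponent check $\frac{\alpha+1}{3}+1>2-\alpha$ is equivalent to $\alpha>1/2$. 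The correct condition is $\gamma=2-\frac{\alpha+1}{3}+\eps<\alpha$.
\item In the limit step, the exceptional sets depend on a regularization parameter $\nu\to0$, not on a fixed scale $R_0$, so you must produce a single pin. The paper chooses $x_\nu$ by Markov, gets $\abs{D_\nu}\ge c_0$ by Cauchy--Schwarz, and passes to a limit point using compactness of $\Delta_x(E_1)$. Your weak-limit and lower-semicontinuity argument can replace this only after a pin valid along a sequence $\nu_k\to0$ has been fixed.
\end{itemize}
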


After integration in \(r\), the energy order is
\[
 \gamma=2-\frac{\alpha+1}{3}+\eps.
\]
Thus \(\gamma<\alpha\) precisely for \(\alpha>5/4\), after choosing
\(\eps\) sufficiently small.

\subsection*{Why no parent-to-canonical reduction is needed}

A canonical wave-packet decomposition is one possible way to repair the
support convention, but it is not necessary once
\cref{thm:enlargement-stable} is available.  The original Falconer parent
packet already has the required physical geometry.  Two technical
hypotheses remain: the spatial cutoff destroys exact Fourier support, and
an absolute \(\RapDec(r)\) tail is not automatically small relative to a
packet of arbitrarily small norm.  Section~\ref{sec:direct-parents} resolves
these points by an exact frequency truncation and an \emph{absolute}
small-packet cutoff.  The corrected arbitrary-packet theorem then applies
directly to the parent family.  This eliminates the longer common-frame
parent-to-canonical construction and makes the correction both shorter and
stronger.

\subsection*{One-sided localization and parameter order}

We use the following standard preliminary localization.

\begin{lemma}[One-sided localization of the Frostman pieces]
\label{lem:one-sided-localization}
Let \(\mu\) be an \(\alpha\)-Frostman probability measure on a compact set
\(E\subset\R^2\).  There are disjoint balls
\(B_i=B(z_i,\rho_*)\) of positive \(\mu\)-measure such that, after restricting
and normalizing,
\[
 \mu_i=\frac{\mu|_{B_i}}{\mu(B_i)},\qquad
 E_i=\supp\mu_i,\quad i=1,2,
\]
the measures remain \(\alpha\)-Frostman and all directions
\[
 \frac{x-y}{\abs{x-y}},\qquad y\in E_1,\ x\in E_2,
\]
lie in one fixed arbitrarily short arc of \(S^1\).
\end{lemma}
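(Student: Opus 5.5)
The plan is to use the fact that \(E\) has positive \(\alpha\)-dimensional Frostman mass in many well-separated regions, so two small balls can be found whose separation vector is almost constant. Fix a target arc length \(\eta>0\).

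First, the support of \(\mu\) is compact and not a single point. In fact \(\mu\) has no atoms, since \(\mu(B(x,r))\lesssim r^\alpha\) with \(\alpha>0\). So there are two points \(z_1\ne z_2\) in \(\supp\mu\); put \(d=\abs{z_1-z_2}>0\).

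Next, choose \(\rho_*\le \min(d/4,\ \eta d/8)\). The balls \(B_i=B(z_i,\rho_*)\) are then disjoint. Each has positive \(\mu\)-measure, because its center lies in the support.

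Now take \(y\in \overline{B_1}\) and \(x\in\overline{B_2}\). Then \(x-y=(z_2-z_1)+w\) with \(\abs{w}\le 2\rho_*\le d/2\). The angle between \(x-y\) and \(z_2-z_1\) is therefore at most \(\arcsin(2\rho_*/d)\le \pi\rho_*/d\le \eta\). Since \(E_i=\supp\mu_i\subset\overline{B_i}\), every direction \((x-y)/\abs{x-y}\) lies in the arc of length \(2\eta\) centred at \((z_2-z_1)/d\). The denominator is at least \(d/2>0\), so these directions are well defined.

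For the Frostman property, we have
\[
\mu_i(B(x,r))\le \mu(B(x,r))/\mu(B_i)\le C r^\alpha/\mu(B_i).
\]
So \(\mu_i\) is \(\alpha\)-Frostman, with constant enlarged by the fixed factor \(\mu(B_i)^{-1}\). This factor depends only on the choice of \(\rho_*\), which is made before any scale parameters \(R,r\). This is the parameter order the later argument needs: \(\eta\) first, then \(\rho_*\) and the Frostman constants, then everything scale dependent. Each \(\mu_i\) is a probability measure by construction.

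There is no real obstacle. The only point needing care is to state the dependence of constants, since the normalization factors \(\mu(B_i)^{-1}\) may be large but are fixed once \(\eta\) is fixed. If Frostman is meant with constant \(1\), rescale the constant and absorb it into the implicit constants in \cref{thm:main}.
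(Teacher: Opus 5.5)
Your proof is correct and follows the same route as the paper's. Both arguments pick two distinct support points \(z_1,z_2\) and take \(\rho_*\ll d=\abs{z_2-z_1}\). Both then observe that \(x-y\) differs from \(z_2-z_1\) by at most \(2\rho_*\), so all directions lie in an arc of length \(O(\rho_*/d)\), and that normalization only multiplies the Frostman constant by \(\mu(B_i)^{-1}\). Your explicit choice \(\rho_*\le\min(d/4,\eta d/8)\) and the \(\arcsin\) bound simply make the paper's \(O(\rho_*/d)\) quantitative. The paper's closing remark about a similarity is a normalization convention, not part of the lemma as stated.
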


\begin{proof}
Choose distinct \(z_1,z_2\in\supp\mu\), put
\(d=\abs{z_2-z_1}>0\), and take \(0<\rho_*\ll d\).  The normalized
restrictions remain Frostman, with constants increased only by their fixed
masses.  For \(y\in B_1\), \(x\in B_2\), the vector \(x-y\) differs from
\(z_2-z_1\) by at most \(2\rho_*\), so the normalized directions lie in an
arc of length \(O(\rho_*/d)\).  A similarity gives the usual bounded
normalization.
\end{proof}

We henceforth use these localized pieces.  A positive-measure pinned
distance set for \(E_1\) is one for the original \(E\).

Fix the final loss \(\eps_0>0\).  Orponen's theorem first determines
\(p=p(\alpha)>1\), and then \(c_\alpha\) is chosen in the bad-pair estimate.
Next choose \(\delta>0\) so small that
\[
 c_\alpha\delta<\frac12,\qquad
 C_\alpha\delta<\frac{\eps_0}{100}
\]
for every loss below.  The rapid-decay order and the absolute small-packet
cutoff are chosen next, and \(R_0\) last.

Section~2 gives the diagnosis and counterexample.  Section~3 derives the
Falconer-scale form of \cref{thm:enlargement-stable}.  Sections~4--7 apply it
directly to the parent packets and prove \cref{eq:main-frequency}.
Section~8 supplies the uniform limiting argument.  Appendix~A proves the
corrected refined-decoupling theorem, and Appendix~B records additional
corrections to the published article.

\section{The printed gaps and the collar counterexample}
\label{sec:gaps}

\subsection{Incompatible tube dimensions}

The setup of \cite[Theorem~4.2]{GIOW} uses tubes of length comparable to
\(R^{1+\delta}\) and then requires every selected tube to lie in \(B_R\).
The proof later treats the tubes as having length \(R\) and radius
\(R^{1/2+\delta}\), and the physical rescaling produces unit-length tubes
of width \(R^{-1/2+\delta}\).  The dimensionally consistent normalization is
therefore
\[
 \text{length }R,\qquad \text{radius }R^{1/2+\delta},
\]
with tails outside a fixed enlargement treated as negligible.

\subsection{Labelled tubes versus activity tubes}

The same setup permits a packet \(f_T\) to be essentially supported in
\(2T\), whereas the multiplicity of a spatial square is the number of
smaller labelled tubes \(T\) meeting it.  The latter count does not control
the number of packets active in \(2T\setminus T\).

\begin{proposition}[Collar counterexample]
\label{prop:intended-counterexample}
Fix \(0<\eps<1/12\) and \(0<\delta<\eps/100\), and take \(p=6\).
Replace the inconsistent printed dimensions by tubes of length \(R\) and
radius \(R^{1/2+\delta}\), but retain the support and multiplicity
conventions just described.  Then the asserted estimate with loss \(R^\eps\)
fails for all sufficiently large \(R\).
\end{proposition}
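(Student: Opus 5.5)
We construct a family of packets that are all active on one fixed square \(Q\) of side \(R^{1/2}\), while every labelled tube \(T\) misses \(Q\); we then compare the two sides of the printed estimate with \(p=6\). The printed inequality reads
\[
 \norm{F}_{L^6(Y)}\lesssim R^\eps\Bigl(\frac{M}{\abs{\cW}}\Bigr)^{1/3}
 \Bigl(\sum_T\norm{F_T}_6^2\Bigr)^{1/2},
\]
where \(M\) counts labelled tubes meeting a square of \(Y\). If \(Y=Q\) meets no labelled tube, the count gives \(M=0\). To avoid a degenerate reading we instead normalize \(M=1\), by adding one harmless tube through \(Q\) or by taking the convention \(M\ge1\). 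The right side then carries the full factor \(\abs{\cW}^{-1/3}\), while the left side sees constructive interference of all packets on \(Q\).

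\textbf{Construction.} Place \(Q\) at the origin. Take \(N\approx R^{1/2}\) caps \(\theta\) of the parabola, one for each direction in a \(1\)-separated set of unit-scale frequencies. For each cap choose a tube \(T_\theta\) of length \(R\) and radius \(R^{1/2+\delta}\) in direction \(\theta\), placed so that \(Q\) lies in the collar \(2T_\theta\setminus T_\theta\). Concretely, translate the core line perpendicularly by about \(\tfrac32R^{1/2+\delta}\) from the origin.

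Let \(F_\theta\) be a smooth wave packet with Fourier support in \(\theta\), adapted to the region \(2T_\theta\) and centred on \(Q\). For instance, take a modulated bump of width \(\sim R^{1/2}\) (times a Schwartz tail) across the strip and length \(R\) along it, translated so that its bulk lies inside \(2T_\theta\) and contains \(Q\). Since \(Q\) is a distance \(\gtrsim R^{1/2+\delta}\) from \(T_\theta\), packet \(\theta\) is genuinely essentially supported in \(2T_\theta\), which the convention allows. Normalize so that \(\abs{F_\theta}\sim1\) on its support. Then \(\norm{F_\theta}_6^6\sim R^{3/2}\), since the support area is \(R\cdot R^{1/2}\). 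Add phases so that \(F_\theta(0)\) are all real and positive; this is possible by modulation.

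\textbf{Two sides.} On \(Q\) (or a subsquare of size \(\sim1\), by the uncertainty principle), \(\abs{F}\gtrsim N\). This gives \(\norm{F}_{L^6(Q)}\gtrsim N\cdot R^{1/6}\), since \(\abs{Q}^{1/6}=R^{1/6}\). For the right side,
\[
 \Bigl(\sum\norm{F_\theta}_6^2\Bigr)^{1/2}\sim N^{1/2}R^{1/4},\qquad \abs{\cW}=N,\qquad M\lesssim1 .
\]
So the right side is \(\lesssim R^\eps N^{-1/3}N^{1/2}R^{1/4}\). With \(N\sim R^{1/2}\) the comparison is \(R^{1/2+1/6}\) against \(R^{\eps+1/12+1/4}\). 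After renormalizing to the statement's normalization (packet norms comparable), this should reduce to the paper's orders \(R^{1/2}\) versus \(R^{1/3+\delta/6+\eps}\); the \(\delta/6\) enters through the radius-\(R^{1/2+\delta}\) count of admissible collars. Since \(\eps<1/12\) and \(\delta<\eps/100\), the left side wins for large \(R\). I will redo the exponent bookkeeping carefully with the exact normalization used in the printed statement.

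\textbf{Main obstacle.} The delicate step is reconciling the construction with the exact printed hypotheses. These are: Fourier support in caps of the prescribed size; tails outside \(2T\) being \(\RapDec\) relative to \(\norm{F_T}_p\); tubes lying in \(B_R\); and the overlap and separation requirements among tubes. I would handle the tail condition by using compactly Fourier-supported Schwartz packets and checking that the mass at distance \(\ge\tfrac12R^{1/2+\delta}\) beyond width \(R^{1/2}\) decays like \(R^{-\delta K}\) for every \(K\). The value of \(M\) also needs care: a square meeting no tube may force \(M=0\), which makes the estimate trivially false. To give a robust fixed-power failure I would instead choose \(Y\) to contain \(Q\) plus squares hit by few tubes, keeping \(M=O(1)\).
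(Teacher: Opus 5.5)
You are using the paper's collar construction: \(N\approx R^{1/2}\) packets in distinct \(R^{-1/2}\)-cap directions. Each is labelled by a tube whose axis lies at transverse distance \(\tfrac32R^{1/2+\delta}\) from the origin, so every packet lives in the collar \(2T\setminus T\) and passes through \(Q\), while no labelled tube meets \(Q\).

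The step that fails is your lower bound \(\norm{F}_{L^6(Q)}\gtrsim NR^{1/6}\). The phases \(e^{2\pi i\omega_\theta\cdot x}\) line up only on a disc of radius \(O(1)\) about the origin, because the frequencies \(\omega_\theta\) fill an arc of length \(O(1)\). Elsewhere on \(Q\) they decorrelate, so you cannot multiply the pointwise bound \(\abs{F}\gtrsim N\) by \(\abs{Q}^{1/6}\). The claimed bound is in fact false. We have \(\norm{F}_\infty\lesssim N\), and bounded overlap of the caps gives \(\norm{F}_{L^2(Q)}^2\lesssim N\abs{Q}=NR\). Hence
\[
 \norm{F}_{L^6(Q)}^6\le\norm{F}_\infty^4\norm{F}_{L^2(Q)}^2\lesssim N^5R\approx R^{7/2},
\]
so \(\norm{F}_{L^6(Q)}\lesssim R^{7/12}\), below your claimed \(R^{2/3}\). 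What the unit disc does give is \(\norm{F}_{L^6(Q)}\gtrsim N\approx R^{1/2}\), and that is the lower bound the paper uses.

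With this correction your construction closes, and no ``renormalization'' is needed. Packets of width \(R^{1/2}\) have \(\norm{F_\theta}_6\approx R^{1/4}\), so the right side is \(\lesssim R^{\eps}N^{-1/3}(NR^{1/2})^{1/2}\approx R^{1/3+\eps}\). Against \(R^{1/2}\) on the left, \(\eps<1/6\) suffices. The paper's \(\delta/6\) does not come from counting collars. It arises because the paper's packet \(a\chi_je^{2\pi i\omega_j\cdot x}\) fills a sub-tube \(U_j\) of radius \(R^{1/2+\delta}/4\) inside the collar, so \(\norm{f_j}_6\approx\abs{a}R^{1/4+\delta/6}\).

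There are three smaller points.

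First, \(M=1\) is admissible directly, since the hypothesis is only an upper bound. No extra tube or extra squares are needed.

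Second, the caps must be the \(R^{-1/2}\)-separated caps of the unit curve, not a ``\(1\)-separated'' set. The unit-size frequency spread is exactly what produces the \(O(1)\) coherence disc.

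Third, your tail check treats only the transverse direction, where there is an \(R^\delta\) margin. Longitudinally your packet has length \(R\), like the tube. A function with Fourier support exactly in a cap of normal thickness \(\approx R^{-1}\) cannot have mass outside a length-\(O(R)\) region that is rapidly small relative to its own norm. The paper avoids this by taking \(\chi_j\) compactly supported in \(U_j\subset2T_j\setminus T_j\). The support convention then holds exactly, and only Fourier concentration in the cap is claimed.
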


\begin{proof}
Let \(N=\lfloor cR^{1/2}\rfloor\), with \(c>0\) small, and choose \(N\)
distinct \(R^{-1/2}\)-caps on \(S^1\), with centers \(\omega_j\).  Let \(Q\)
be the \(R^{1/2}\)-square centered at the origin and put
\(w=R^{1/2+\delta}\).

For each \(j\), choose a length-\(R\) tube \(T_j\) in direction \(\omega_j\)
whose axis is at transverse distance \(3w/2\) from the origin.  Let \(U_j\)
be the tube in the same direction whose axis passes through the origin and
whose radius is \(w/4\).  For large \(R\),
\[
 Q\subset\tfrac12U_j,\qquad
 U_j\subset2T_j\setminus T_j,\qquad
 Q\cap T_j=\varnothing.
\]
Choose a smooth bump \(\chi_j\) equal to one on \(Q\), supported in \(U_j\),
and adapted to its dimensions, and set
\[
 f_j(x)=a\chi_j(x)e^{2\pi i\omega_j\cdot x},\qquad
 f=\sum_{j=1}^N f_j.
\]
Then \(f_j\) is supported in \(2T_j\), and its Fourier transform is
concentrated in the required cap.  On the fixed disc
\(\abs{x}\le1/100\), all phases lie in a common sector, so
\[
 \norm{f}_{L^6(Q)}\gtrsim \abs{a}\,N
 \gtrsim\abs{a}\,R^{1/2}.
\]
Every labelled tube misses \(Q\), so the printed hypothesis permits \(M=1\).
Moreover,
\[
 \norm{f_j}_6\lesssim\abs{a}\,\abs{U_j}^{1/6}
 \lesssim\abs{a}\,R^{1/4+\delta/6}.
\]
The claimed right-hand side is therefore at most
\[
 C_\eps R^\eps N^{-1/3}
 \left(N\abs{a}^2R^{1/2+\delta/3}\right)^{1/2}
 \lesssim C_\eps\abs{a}\,R^{1/3+\delta/6+\eps},
\]
which is smaller than the preceding lower bound because
\(\delta/6+\eps<1/6\).
\end{proof}

\begin{remark}
This proposition does not contradict a canonical refined-decoupling theorem:
in a canonical decomposition the activity tube is part of the packet data and
that enlarged tube is what the multiplicity counts.  The counterexample
deliberately separates the label from the region carrying the packet.
\end{remark}

\subsection{Local constancy does not retain the same set}

If \(\supp\wh H\subset B(0,Cr)\), choose \(\varphi_r\) with
\(H=H*\varphi_r\).  Cauchy--Schwarz and Fubini yield
\[
 \int_Y\abs{H}^2\,d\mu
 \lesssim
 \int\abs{H(z)}^2
       \bigl((\mu|_Y)*\abs{\varphi_r}\bigr)(z)\,dz.
\]
The density on the right is concentrated on a neighborhood of \(Y\), not
on \(Y\) itself.  The correction in Section~\ref{sec:incidence-local}
retains that neighborhood and transfers the activity multiplicity to its
covering squares.

\begin{table}[!htbp]
\caption{Original proof locations, literal issues, and repairs.}
\label{tab:comparison}
\small
\renewcommand{\arraystretch}{1.16}
\begin{tabularx}{\textwidth}{>{\raggedright\arraybackslash}p{0.24\textwidth}
                              >{\raggedright\arraybackslash}p{0.33\textwidth}
                              >{\raggedright\arraybackslash}X}
\toprule
Original location & Issue or proof obligation & Counterexample or repair\\
\midrule
GIOW Theorem 4.2
& Packets may live in \(2T\), while multiplicity counts \(T\)
& \Cref{prop:intended-counterexample}; count a strictly larger activity tube\\
GIOW Theorem 4.2
& Tube dimensions conflict with \(T\subset B_R\) and with the rescaling
& Use length \(R\), width \(\Lambda R^{1/2}\), and track the dilation margin\\
GIOW Proposition 5.3
& Parent packets do not initially have exact Fourier support or relative tails
& Exact frequency truncation plus an absolute small-packet cutoff; apply
  \cref{thm:enlargement-stable} directly\\
Local-constancy step
& The mollified density is integrated over the same set
& Retain the neighborhood \(Y^+\) and cover it by nearby squares\\
Definition of the good part
& A rigorous approximate-identity limit is deferred
& Uniform smoothing, pin selection, and compact limiting argument\\
GIOW Lemma 7.5
& The displayed \(L^2\) remainder is unsquared
& Replace it by \(O((A+B)^2)\)\\
Infinite train-track paragraph
& Nesting and persistence of lower bounds are not proved
& Withdraw the assertion unless an explicit Moran construction is supplied\\
\bottomrule
\end{tabularx}
\end{table}

\section{The corrected theorem at the Falconer scale}
\label{sec:corrected-decoupling}

Theorem~\ref{thm:enlargement-stable} is proved in
Appendix~\ref{app:refined-decoupling}.  Demeter's formulation
\cite[Theorem~1.4]{DemeterRefined} is a later restatement of the canonical
GIOW result.  Carbery--Li--Pang--Yung give a later full canonical treatment
for well-curved curves \cite{CLPY}.  Neither is used to prove the
arbitrary-packet theorem here.

We now record the direct unit-scale corollary used for the Falconer parents.

\begin{corollary}[Falconer-scale arbitrary-packet form]
\label{cor:falconer-packets}
Fix \(2\le p\le6\), \(\eps>0\), and \(0<\delta\le\delta_0(\eps)\).
For all sufficiently large \(r\), let \(\cW\) be a finite family of
unit-length tubes \(T\subset B(0,3)\) with one common half-width \(\rho\)
satisfying
\[
 c_0r^{-1/2+\delta}\le\rho\le C_0r^{-1/2+\delta}.
\]
The directions are drawn from the fixed chart-adapted \(r^{-1/2}\)-lattices
described in the proof.  For \(c>0\), the notation \(cT\) means the
concentric transverse dilation about the full axis of \(T\), restricted to
\(B(0,3)\).  For each direction, assume the core tubes have
bounded overlap.  Associate to \(T\) an arc \(\theta(T)\subset S_r^1\) of
tangential length \(O(r^{1/2})\), normal to \(T\).  Suppose
\[
\begin{aligned}
 f&=\sum_{T\in\cW}f_T,\\
 \supp\wh f_T&\subset
 \cN_{r^{3\delta/4}}\bigl(\theta(T)\bigr),\\
 \norm{f_T}_{L^p(\R^2\setminus2.1T)}
 &\le r^{-100}\norm{f_T}_p.
\end{aligned}
\]
and that the packet norms are comparable within a factor two.  Let \(Y\)
be a union of \(r^{-1/2}\)-squares such that
\[
 \#\{T\in\cW:3T\cap60q\ne\varnothing\}\le M
 \qquad(q\subset Y).
\]
Then
\begin{equation}\label{eq:falconer-packet-cor}
 \norm{f}_{L^p(Y)}
 \lesssim_{\eps,\delta,p}
 r^\eps
 \left(\frac{M}{\abs{\cW}}\right)^{\frac12-\frac1p}
 \left(\sum_{T\in\cW}\norm{f_T}_p^2\right)^{1/2}.
\end{equation}
\end{corollary}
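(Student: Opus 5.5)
We will deduce the corollary from \cref{thm:enlargement-stable} by parabolic/physical rescaling, after a finite decomposition by direction. The plan is to rescale by \(R=r\) so that the physical picture becomes the \(R\)-scale picture of \cref{eq:intro-strip}, while frequencies become the unit-scale caps of \cref{eq:intro-cap}.

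\textbf{Step 1: reduce to one chart.} Cover \(S^1\) by \(O(1)\) arcs on which the circle is a graph \(\xi_2=g(\xi_1)\) after a rotation, with \(g\) in the normalized phase family of \cref{app:normalized-family}. Split \(\cW\) into \(O(1)\) subfamilies according to the chart containing the direction of \(\theta(T)\). By the triangle inequality, and since the multiplicity \(M\) and the \(\ell^2\) sum can only decrease on each subfamily, it suffices to treat one chart. If \(\abs{\cW'}\ll\abs{\cW}\) for the subfamily \(\cW'\), the factor \((M/\abs{\cW'})^{1/2-1/p}\) is worse. We would therefore first pigeonhole to the largest subfamily and bound the others using \(\abs{\cW'}\le\abs{\cW}\) together with the \(\ell^2\) sum over \(\cW'\) only. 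This route seems delicate; I expect instead to use that the \(O(1)\)-many pieces each obey the bound with \(M\) and their own cardinality. That bound can be weaker, so we need a cleaner device. The simplest is the trivial estimate \((M/\abs{\cW'})^{1/2-1/p}\le (M/\abs{\cW})^{1/2-1/p}\cdot(\abs{\cW}/\abs{\cW'})^{1/2-1/p}\), combined with the packet-norm comparability: the \(\ell^2\) sum over \(\cW'\) equals \((\abs{\cW'}/\abs{\cW})^{1/2}\) times the full sum up to a factor \(2\). The exponent \(1/2\ge 1/2-1/p\) then absorbs the loss.

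\textbf{Step 2: rescale.} Within a chart, apply the map \(x\mapsto rx\), together with an affine map in frequency sending \(S_r^1\) to the curve \(\xi_2=g(\xi_1)\), with the arc \(\theta(T)\) of length \(r^{1/2}\) going to an interval \(I\) of length \(r^{-1/2}\). Rotate so the chart direction is vertical; then \(r\)-scale tubes of half-width \(\rho r\sim r^{1/2+\delta}\) and length \(r\) become strips \(T_{I,v}\) with \(\Lambda\sim r^{\delta}\). This satisfies \(R^{\vartheta/2}\le\Lambda\le R^{2\vartheta}\) if we take \(\vartheta=\delta\) (up to constants from \(c_0,C_0\), absorbed by adjusting \(\vartheta\) slightly), provided \(\delta\le\vartheta_0(\eps)\). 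The frequency neighborhood \(\cN_{r^{3\delta/4}}\) rescales to thickness \(r^{3\delta/4}/r\le4\Lambda R^{-1}\), and the tangential length \(O(r^{1/2})\) rescales into \(32I\) once the lattice centres are matched to the chart-adapted lattice. The directions from the lattice then correspond exactly to \(c_I\), and the bounded overlap of core tubes gives at most fourfold overlap per \(I\) after grouping translates. The \(r^{-1/2}\)-squares become \(R^{1/2}\)-squares, and the concentration hypothesis outside \(2.1T\) gives the theorem's hypothesis with \(a=2.1\).

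\textbf{Step 3: match the multiplicity and conclude.} We need \(b\ge a+1/2=2.6\) and the count of \(bT\) meeting each square \(\le M\). Take \(b=3\). If \(3T\) meets a square \(q\), then it meets \(60q\), so the hypothesis gives the bound. Since \(q\subset 60q\), this direction is trivial. All \(L^p\) norms scale by the same Jacobian factor on both sides, so \eqref{eq:enlargement-stable} transfers to \eqref{eq:falconer-packet-cor} with \(R^\epsilon=r^\eps\). The main obstacle is the bookkeeping in Step 2. There, the tube widths, with their constants \(c_0,C_0\) and the rotation, must land within the fixed ranges of \(\Lambda\), \(a\le10\) and \(b\le20\). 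Also, the transverse dilation about the axis of a unit tube with slanted direction must correspond to the dilation of \(T_{I,v}\) in the \(X_1+g'(c_I)X_2\) coordinate. This correspondence holds up to a bounded factor since \(\abs{g'}\) is small in a chart, and it is why the slack \(2.1\) versus \(3\) and the factor \(60\) are needed.
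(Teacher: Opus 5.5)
Your overall route is the paper's: chart decomposition, dual linear rescaling, then \cref{thm:enlargement-stable} with a margin $b\ge a+\tfrac12$. Your Step~1 device is correct. Using factor-two comparability of packet norms to trade $\abs{\cW'}^{-(1/2-1/p)}$ against $(\abs{\cW'}/\abs{\cW})^{1/2}$ makes explicit what the paper absorbs into the implicit constant. The gap is that Steps~2--3, which are the whole content of this corollary, fail as written.

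First, rotation together with $x\mapsto rx$ and $R=r$ does not place you in the theorem's setting. A circular arc written as a graph with $g'(0)=0$ has $g''(0)$ equal to the reciprocal of its radius, and it is defined only on an interval of length twice that radius. So $\abs{g''-2}\le1/10$ on $[-2,2]$ is impossible, and no rotation or isotropic dilation puts the chart phase into the family \eqref{eq:normalized-phase}. An anisotropic frequency normalization is forced. The physical map must be its inverse transpose, so it cannot be $x\mapsto rx$. The paper takes $L(\xi)=(\xi_1/(r\lambda),\kappa(\xi_2+r)/(r\lambda^2))$ with $\lambda=1/12$, $\kappa=1.92$, the dual map $P(x)=(r\lambda x_1,r\lambda^2x_2/\kappa)$, and $R=3r\lambda^2/\kappa$. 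These are chosen so that $P(B(0,3))\subset[-25R,25R]\times[-R,R]$. With your normalization, a packet concentrated in $2.1T\subset B(0,3)$ can carry its mass where $\abs{X_2}$ is close to $3r$, outside $[-25r,25r]\times[-r,r]$. Since $\R^2\setminus aT_{I,v}$ contains the complement of that box, the relative-tail hypothesis of the theorem fails.

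Second, the multiplicity step proves the wrong implication. After the anisotropic map, $P(Y)$ is not a union of normalized $R^{1/2}$-squares. You must apply the theorem on the union $Y'\supset P(Y)$ of normalized squares $Q$ meeting $P(Y)$, and show that each such $Q$ meets $O(M)$ strips $bT_{I,v}$. This needs two facts you do not supply:
\begin{enumerate}
\item $P^{-1}(Q)$ is a bounded-eccentricity rectangle of dimensions $O(r^{-1/2})$, about $1.25r^{-1/2}$ by $28.8r^{-1/2}$ in the paper's normalization. It meets some $q\subset Y$ and therefore lies in $60q$; this is what the factor $60$ is for.
\item $bT_{I,v}\cap P(B(0,3))\subset P(3T)$, so that $bT_{I,v}\cap Q\ne\varnothing$ forces $3T\cap60q\ne\varnothing$.
\end{enumerate}
Your observation that $3T\cap q\ne\varnothing$ implies $3T\cap60q\ne\varnothing$ is not what is needed.

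Finally, for one common $\Lambda$, the half-width of $P(T)$ measured in the functional $X_1+g'(c_I)X_2$ is proportional to $\rho/\cos\phi$, where $\phi$ is the angle of $T$ from the chart centre. Since this varies with direction, $P(2.1T)\subset2.1T_{I,v}$ and $3T_{I,v}\cap P(B(0,3))\subset P(3T)$ cannot both hold. Your pair $a=2.1$, $b=3$ must therefore be replaced, for instance by the paper's $a=2.2$, $b=2.8$ on a short chart. The relevant smallness is that of the variation of $1/\cos\phi$, not of $\abs{g'}$, which in normalized coordinates reaches about $4$ on $[-2,2]$.
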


\begin{proof}
Cover the circle by a fixed number of arcs.  On one arc rotate so that its
center is \((0,-1)\), write
\[
 g_0(t)=1-\sqrt{1-t^2},
 \qquad \lambda=\frac1{12},\qquad \kappa=1.92,
\]
and use the frequency normalization
\[
 L(\xi)=\left(\frac{\xi_1}{r\lambda},
              \frac{\kappa(\xi_2+r)}{r\lambda^2}\right).
\]
The normalized phase
\[
 g(u)=\frac{\kappa}{\lambda^2}g_0(\lambda u)
\]
satisfies \(1.92\le g''\le2.01\) on the chart.  Put
\[
 R=\frac{3r\lambda^2}{\kappa}.
\]
Choose the annular angular partition, within this fixed chart, so that its
normalized tangential labels are a translate of a fixed
\(R^{-1/2}\)-lattice.  At chart boundaries a fixed partition of unity and a
finite coloring preserve this property and the bounded overlap by label.

The dual physical map is
\[
 P(x)=\left(r\lambda x_1,\frac{r\lambda^2}{\kappa}x_2\right).
\]
The image of each tube is a strip of the form
\eqref{eq:intro-strip} with
\[
 \Lambda=\frac{r\lambda\rho}{R^{1/2}}
 \asymp r^\delta.
\]
For large \(r\),
\(R^{\delta/2}\le\Lambda\le R^{2\delta}\).
The frequency neighborhood in the hypothesis maps into
\(\theta_I^\Lambda\): its tangential size is \(O(R^{-1/2})\), and its
normal thickness is
\(O(r^{-1+3\delta/4})=o(\Lambda R^{-1})\).
The fixed linear normalization also gives
\[
 P(2.1T)\subset2.2T_{I,v},\qquad
 2.8T_{I,v}\cap P(B(0,3))\subset P(3T).
\]
A normalized \(R^{1/2}\)-square pulls back to a bounded-eccentricity
rectangle of dimensions \(O(r^{-1/2})\), hence is contained in \(60q\) for
one original grid square \(q\).  Thus the normalized multiplicity is at most
an absolute multiple of \(M\).  After a finite coloring, the overlap for one
label is at most four.  Apply \cref{thm:enlargement-stable} with
\(a=2.2\), \(b=2.8\), and absorb the fixed number of charts, colors and
covering squares into the implicit constant.  The Jacobian of \(P\) occurs
on both sides of the inequality.  A family with finitely many comparable
widths is handled by splitting it into those width classes.
\end{proof}

\begin{remark}[Relation to the later canonical literature]
Carbery--Li--Pang--Yung construct canonical packets by an exact
Fourier-series expansion and prove refined decoupling by counting enlarged
tiles on which those packets are active \cite{CLPY}.  Corollary
\ref{cor:falconer-packets} is different in scope: it permits arbitrary
packets satisfying exact cap support and a relative physical-tail condition.
This is the form that makes a direct application to the Falconer parent
packets possible.
\end{remark}

\section{The enlarged goodness region and the bad part}
\label{sec:bad}

Throughout this section \(E_1,E_2\) are the localized pieces from
\cref{lem:one-sided-localization}.  Let \(\ell_T\) be the axis of a parent
tube of half-width \(\rho_j\).  Define the infinite concentric activity and
goodness strips by
\[
 T^{\mathrm{act}}
 =\{x:\dist(x,\ell_T)\le3\rho_j\},\qquad
 T^{\mathrm{good}}
 =\{x:\dist(x,\ell_T)\le A_{\mathrm{good}}\rho_j\},
\]
where \(A_{\mathrm{good}}>3\) is a fixed absolute constant chosen so large
that
\begin{equation}\label{eq:Agood-choice}
 (A_{\mathrm{good}}-3)c_\rho>300
\end{equation}
for the fixed lower width constant
\(\rho_j\ge c_\rho r^{-1/2+\delta}\).
We call \(T\in\mathbb T_{j,\tau}\) bad if
\begin{equation}\label{eq:bad-definition}
 \mu_2(T^{\mathrm{good}})
 \ge R_j^{-1/2+c_\alpha\delta},
\end{equation}
and good otherwise.

\begin{lemma}[Bad-pair estimate for the enlarged strips]
\label{lem:bad-pair}
Assume \(\alpha>1\).  There is \(c_\alpha>1\), depending only on the
radial-projection exponent for \(\alpha\), such that
\begin{equation}\label{eq:bad-pair}
 (\mu_1\times\mu_2)(\operatorname{Bad}_j)
 \lesssim R_j^{-2\delta},
\end{equation}
where
\[
 \operatorname{Bad}_j
 =\{(y,x):\text{some bad }T\in\mathbb T_j
                 \text{ satisfies }x,y\in2T\}.
\]
\end{lemma}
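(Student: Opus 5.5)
The plan is to follow the GIOW argument for the bad pairs and use Orponen's radial-projection theorem in its standard \(L^p\) form. For \(y\in E_1\), write \(\pi_y(x)=(x-y)/\abs{x-y}\). Since \(\alpha>1\), Orponen's theorem \cite{Orponen} gives an exponent \(p=p(\alpha)>1\) such that
\[
 \int\norm{\pi_{y\#}\mu_2}_{L^p(S^1)}^p\,d\mu_1(y)<\infty .
\]
The localization in \cref{lem:one-sided-localization} keeps \(E_1,E_2\) separated, so \(\abs{x-y}\gtrsim1\) on the support. Hence \(\pi_y\) is bi-Lipschitz on each transverse line segment, and strips of width \(O(\rho_j)\) through \(y\) map to arcs of length \(O(\rho_j)\).

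\textbf{Step 1: reduce to a single point \(y\).} Fix \(y\). If \((y,x)\in\operatorname{Bad}_j\), then some bad \(T\) has \(y,x\in2T\). Because \(y\in2T\subset T^{\mathrm{good}}\) and \(A_{\mathrm{good}}>3\), the whole strip \(T^{\mathrm{good}}\) lies inside the cone
\[
 \pi_y^{-1}(J_T),
\]
where \(J_T\subset S^1\) is an arc of length \(\lesssim A_{\mathrm{good}}\rho_j\asymp R_j^{-1/2+\delta}\) (using \(R_j\asymp\rho_j^{-2}\) up to \(R_j^{\delta}\) factors). Also \(x\in\pi_y^{-1}(J_T)\). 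Badness then gives
\[
 \pi_{y\#}\mu_2(J_T)\ \ge\ \mu_2(T^{\mathrm{good}})\ \ge\ R_j^{-1/2+c_\alpha\delta}.
\]
So the fiber \(\{x:(y,x)\in\operatorname{Bad}_j\}\) is contained in \(\pi_y^{-1}(\mathcal H_y)\). Here \(\mathcal H_y\) is the union of the heavy arcs: arcs of length \(\sim R_j^{-1/2+\delta}\) carrying \(\pi_{y\#}\mu_2\)-mass at least \(R_j^{-1/2+c_\alpha\delta}\). Since directions come from a lattice, a bounded-overlap family of such arcs suffices, after enlarging each by a constant factor.

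\textbf{Step 2: Hölder.} Let \(\nu=\pi_{y\#}\mu_2\). On a heavy arc \(J\),
\[
 \nu(J)\le\abs{J}^{1-1/p}\,\norm{\nu}_{L^p(J)} .
\]
Combining this with the mass lower bound, a Chebyshev-type estimate gives
\[
 \nu(\mathcal H_y)\ \le\ \sum_J\nu(J)\ \le\ \sum_J\nu(J)^{p}\bigl(R_j^{-1/2+c_\alpha\delta}\bigr)^{1-p}
 \ \lesssim\ \sum_J\abs{J}^{p-1}\norm{\nu}_{L^p(J)}^p\, R_j^{(1/2-c_\alpha\delta)(p-1)} .
\]
Substituting \(\abs J\sim R_j^{-1/2+\delta}\), this becomes
\[
 \nu(\mathcal H_y)\lesssim R_j^{-(p-1)(c_\alpha-1)\delta}\norm{\nu}_{L^p}^p .
\]
Now choose \(c_\alpha>1\) so large that \((p-1)(c_\alpha-1)\ge2\), that is, \(c_\alpha=1+2/(p-1)\), which depends only on the radial exponent as required. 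Then
\[
 \mu_2\bigl(\{x:(y,x)\in\operatorname{Bad}_j\}\bigr)\lesssim R_j^{-2\delta}\norm{\pi_{y\#}\mu_2}_p^p .
\]
Integrating in \(d\mu_1(y)\) and applying Fubini yields \eqref{eq:bad-pair}.

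\textbf{Main obstacle.} The delicate point is the geometry in Step 1. We must check that the enlarged strips \(T^{\mathrm{good}}\) and \(2T\) still fit in cones of aperture \(O(\rho_j)\) about \(y\). This holds because \(T\) has bounded length while the strips are only \(A_{\mathrm{good}}\) times wider, and the separation from the localization keeps the angular distortion bounded. We also need a bounded-overlap selection of heavy arcs, which the lattice structure of directions provides. Once this is in place, the exponent bookkeeping above closes with room to spare.
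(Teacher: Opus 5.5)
Your argument is correct and follows the paper's route. You fix \(y\), turn badness into heavy arcs of length \(O(R_j^{-1/2+\delta})\) for \(\nu=\pi_{y\#}\mu_2\), combine H\"older with Orponen's \(L^p\) radial-projection bound, and integrate in \(y\). The only difference is bookkeeping: the paper first bounds the length of the heavy set by a Vitali count, \(\lesssim R_j^{-(c_\alpha-1)\delta}\), and applies H\"older once on that union, giving the exponent \((c_\alpha-1)(1-1/p)\delta\); your arc-by-arc step \(\nu(J)\le\nu(J)^p\,(R_j^{-1/2+c_\alpha\delta})^{1-p}\) gives the slightly better exponent \((c_\alpha-1)(p-1)\delta\).

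One small correction: it is \(T^{\mathrm{good}}\cap E_2\), not the whole infinite strip \(T^{\mathrm{good}}\), that lies in the cone \(\pi_y^{-1}(J_T)\). The separation from \cref{lem:one-sided-localization} supplies this, and it is all the mass inequality \(\nu(J_T)\ge\mu_2(T^{\mathrm{good}})\) needs.
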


\begin{proof}
Fix \(y\in E_1\).  If \(y\in2T\) and \(T\) is bad, the one-sided
localization places
\[
 P_y(T^{\mathrm{good}}\cap E_2)
 \subset A_y(T)
\]
in one arc of length \(O(R_j^{-1/2+\delta})\).  Since \(\mu_2\) is supported
on \(E_2\),
\[
 P_y\mu_2(A_y(T))
 \ge\mu_2(T^{\mathrm{good}})
 \ge R_j^{-1/2+c_\alpha\delta}.
\]
A Vitali subfamily shows
\[
 \abs{P_y(\operatorname{Bad}_j(y))}
 \lesssim R_j^{-(c_\alpha-1)\delta}.
\]
Orponen's theorem gives \(p=p(\alpha)>1\) such that
\[
 \int\norm{P_y\mu_2}_{L^p(S^1)}^p\,d\mu_1(y)<\infty.
\]
H\"older in the arc and then in \(y\) yields
\[
 (\mu_1\times\mu_2)(\operatorname{Bad}_j)
 \lesssim
 R_j^{-(c_\alpha-1)(1-1/p)\delta}.
\]
Choose \(c_\alpha\) so that
\((c_\alpha-1)(1-1/p)\ge2\).
\end{proof}

\begin{proposition}[Uniform control of the enlarged bad part]
\label{prop:bad-L1}
For \(R_0\) sufficiently large there is \(E_2'\subset E_2\) with
\(\mu_2(E_2')\ge1-10^{-3}\) such that
\[
 \norm{d_x{}_*\mu_1-d_x{}_*\mu_{1,\mathrm{good}}}_{L^1}
 \le10^{-3}\qquad(x\in E_2').
\]
The constants remain uniform for smooth approximations with the same
Frostman and separation constants.
\end{proposition}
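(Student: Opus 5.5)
We follow the GIOW bad-part argument, with the enlarged strips of \cref{lem:bad-pair} in place of the original ones. The good part is defined scale by scale: $\mu_{1,\mathrm{good}}=\sum_{j\ge1}\sum_\tau\sum_{T\in\mathbb T_{j,\tau},\ T\text{ good}}M_T\mu_1$, plus the low-frequency piece $M_0\mu_1$. Hence the difference $\mu_1-\mu_{1,\mathrm{good}}=\sum_j\mu_{1,\mathrm{bad},j}$ is a sum over scales $R_j=2^jR_0$ of bad wave packets. The goal is therefore to show
\[
 \int\norm{d_x{}_*\mu_{1,\mathrm{bad},j}}_{L^1}\,d\mu_2(x)\lesssim R_j^{-\delta}.
\]
Summing over $j\ge1$ gives $\lesssim R_0^{-\delta}$, which is at most $10^{-6}$ once $R_0$ is large. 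Chebyshev then produces $E_2'$ with $\mu_2(E_2\setminus E_2')\le10^{-3}$ on which the $L^1$ difference is at most $10^{-3}$.

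For fixed $j$ and $x\in E_2$, we split $\mu_{1,\mathrm{bad},j}$ according to where $x$ lies.

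\emph{First piece.} If $x\notin2T$, the bad packet $M_T\mu_1$ is essentially supported in $2T$ (by the physical tail $\RapDec(R_j)$). Its pushforward under $d_x$ is then negligible. The argument is the stationary-phase fact that $d_x{}_*(M_T\mu_1)$ oscillates at frequency $R_j$ in a direction transverse to the level sets of $\abs{x-\cdot}$ when $x$ is off the axis neighbourhood. Integrating by parts against $\mu_1$ gives $\RapDec(R_j)$ per packet. There are polynomially many packets, so this piece contributes $\RapDec(R_j)$.

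\emph{Second piece.} If $x\in2T$, we bound $\norm{d_x{}_*(M_T\mu_1)}_{L^1}\le\norm{M_T\mu_1}_{L^1}$. Since $M_T\mu_1\approx\chi_{2T}\cdot(\text{mollified }\mu_1)$, this reduces to estimating
\[
 \int\!\!\int\mathbf 1_{\operatorname{Bad}_j}(y,x)\,d\mu_1(y)\,d\mu_2(x)
\]
up to a loss $R_j^{O(\delta)}$. That loss comes from the number of $\tau$ and the $\ell^1$-to-measure comparison of the smoothing kernels, using the same Schwartz-tail estimates as in GIOW. \Cref{lem:bad-pair} bounds this quantity by $R_j^{-2\delta}$, and choosing $\delta$ small absorbs the $R_j^{O(\delta)}$ loss, so the contribution is at most $R_j^{-\delta}$.

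The enlargement $T^{\mathrm{good}}\supset T^{\mathrm{act}}\supset 2T$ is exactly what makes this step consistent. Badness is tested on the good strip, while the pairs we must control live in $2T$. So \cref{lem:bad-pair} applies verbatim, and \eqref{eq:Agood-choice} ensures the $3\rho_j$-activity region sits well inside the goodness strip, even after the $r^{-1/2}$-neighbourhoods produced by mollification.

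The main obstacle is turning the smooth packet $M_T\mu_1$ into the indicator bound with only an $R_j^{O(\delta)}$ loss, uniformly in the approximation. The plan is to dominate $\abs{M_T\mu_1}\lesssim R_j^{O(\delta)}\,w_{T}\cdot(\mu_1*\phi_{R_j^{-1}})+\RapDec(R_j)$, where $w_T$ is a weight decaying rapidly off $2T$. Next, sum over $T$ using finite overlap in $\tau$, and then use the Frostman bound to pass from $\mu_1*\phi$ to $\mu_1$ on slightly enlarged strips; this last step is where the margin between $2T$ and $T^{\mathrm{act}}$ is spent. Every constant here depends only on $\alpha$, the Frostman constants, the separation from \cref{lem:one-sided-localization}, and Orponen's exponent. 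This gives the claimed uniformity for smooth approximations with the same constants.
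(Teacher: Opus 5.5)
Your route is the paper's. It uses the fibre bound \eqref{eq:bad-L1-fiber} from GIOW: packets with $x\notin2T$ are negligible, and the rest are counted in $L^1$. Then comes Fubini with \cref{lem:bad-pair}, then Markov. Your single Chebyshev step on $\sum_j$ is a harmless variant of the paper's per-scale sets $B_j$.

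The gap is in the exponent bookkeeping of your second piece. The loss and the gain are both powers $R_j^{c\delta}$, so taking $\delta$ small changes nothing. An unspecified $R_j^{O(\delta)}$ loss cannot be absorbed by $R_j^{-2\delta}$ that way. What makes the step work is the sharp count
\[
 \#\{T\in\mathbb T_j:\ x,y\in2T\}\lesssim R_j^{\delta}
 \qquad(y\in E_1,\ x\in E_2).
\]
It holds because, for separated $x,y$, the admissible directions lie in an $O(R_j^{-1/2+\delta})$ window of the $R_j^{-1/2}$-net, and each direction gives $O(1)$ tubes through $x$. This is exactly the remark in the paper's proof. It gives $\sum_{T\ \mathrm{bad},\,x\in2T}\mu_1(2T)\lesssim R_j^{\delta}\mu_1(\operatorname{Bad}_j(x))$, and $R_j^{\delta}\cdot R_j^{-2\delta}=R_j^{-\delta}$ is summable. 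If the loss really were a larger fixed power $R_j^{C\delta}$, the remedy would be to enlarge $c_\alpha$, which is chosen before $\delta$, not to shrink $\delta$.

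Your ``main obstacle'' paragraph should also be replaced. The pointwise bound $\abs{M_T\mu_1}\lesssim R_j^{O(\delta)}w_T\,(\mu_1*\phi_{R_j^{-1}})$ is false. The kernel $\check\psi_{j,\tau}$ lives on an $R_j^{-1}\times R_j^{-1/2}$ rectangle and is not dominated by an isotropic $R_j^{-1}$ bump. No pointwise bound is needed anyway. With $\widetilde h(z)=h(-z)$, duality gives
\[
 \norm{M_T\mu_1}_1
 \le\int\bigl(\widetilde{\abs{\check\psi_{j,\tau}}}*\eta_T\bigr)\,d\mu_1
 \lesssim\mu_1(2T)+\RapDec(R_j).
\]
This uses only $\norm{\check\psi_{j,\tau}}_1\lesssim1$, the anisotropic decay of the kernel, and $\mu_1(\R^2)=1$. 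There is no $\delta$-loss and no Frostman input.

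The margin consumed is the one between $\supp\eta_T$ and $2T$. The margin between $2T$ and $T^{\mathrm{act}}$ plays no role in this proposition; it matters only in \cref{lem:incidence}.

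In your first piece, concentration in $2T$ is not what makes $d_x{}_*(M_T\mu_1)$ negligible, since circles about $x\notin2T$ still cross $T$. The point is that the frequencies in $\tau$ have a component $\gtrsim R_j^{1/2+\delta}$ \emph{tangent} to those circles. Integration by parts along them therefore gains powers of $R_j^{-\delta}$.
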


\begin{proof}
The stationary-phase and \(L^1\) estimates of
\cite[Lemmas~3.1--3.4]{GIOW} do not use the definition of badness.  They give
\begin{equation}\label{eq:bad-L1-fiber}
 \norm{d_x{}_*\mu_1-d_x{}_*\mu_{1,\mathrm{good}}}_{1}
 \lesssim
 \sum_{j\ge1}R_j^\delta\mu_1(\operatorname{Bad}_j(x))
 +\RapDec(R_0).
\end{equation}
For fixed separated \(x,y\), the admissible directions occupy an
\(O(R_j^{-1/2+\delta})\) window in an \(R_j^{-1/2}\)-net, which gives the
factor \(R_j^\delta\).

By Fubini and \cref{lem:bad-pair},
\[
 \int\mu_1(\operatorname{Bad}_j(x))\,d\mu_2(x)
 \lesssim R_j^{-2\delta}.
\]
Let
\(B_j=\{x:\mu_1(\operatorname{Bad}_j(x))>R_j^{-3\delta/2}\}\).
Then \(\mu_2(B_j)\lesssim R_j^{-\delta/2}\).  For large \(R_0\), the union
of the \(B_j\) has measure at most \(10^{-3}\), and outside it
\eqref{eq:bad-L1-fiber} is bounded by
\(\sum_jR_j^{-\delta/2}+\RapDec(R_0)\le10^{-3}\).
\end{proof}

\section{Direct preparation of the Falconer parent packets}
\label{sec:direct-parents}

This section answers the structural question raised by the corrected
arbitrary-packet theorem: it can be applied directly to the original parent
packets.  A canonical parent-to-core decomposition is unnecessary.

Fix \(r>10R_0\), a scale \(R_j\sim r\), and a good parent
\(T\in\mathbb T_{j,\tau}\).  Let
\(\eta_1\in C_c^\infty(B(0,3/2))\) equal one on the unit disc and define
\begin{equation}\label{eq:parent-fT}
 f_T=\eta_1\bigl(M_T\mu_1*\wh{\sigma_r}\bigr).
\end{equation}
The stationary-phase calculation in \cite[p.~33]{GIOW} gives
\[
 f_T=\RapDec(r)\quad\text{off }2T,
\]
while \(\wh{M_T\mu_1}\) is rapidly decreasing off \(2\tau\).
The spatial cutoff \(\eta_1\) prevents \(\wh f_T\) from having exact compact
support, so we make one exact truncation.

\begin{lemma}[Exact parent-packet truncation]
\label{lem:parent-truncation}
There are smooth frequency cutoffs \(\chi_T\) and functions
\[
 \widetilde f_T=(\chi_T\wh f_T)^\vee
\]
such that, for every \(N\),
\begin{align}
 \supp\wh{\widetilde f_T}
 &\subset\cN_{r^{3\delta/4}}\bigl(\theta(T)\bigr),
 \label{eq:parent-exact-support}\\
 \norm{f_T-\widetilde f_T}_6
 &\le C_Nr^{-N},
 \label{eq:parent-trunc-error}\\
 \norm{\widetilde f_T}_{L^6(\R^2\setminus2.1T)}
 +\norm{\widetilde f_T}_{L^6(\R^2\setminus B(0,3))}
 &\le C_Nr^{-N}.
 \label{eq:parent-absolute-tail}
\end{align}
Moreover,
\begin{equation}\label{eq:parent-L2-crude}
 \norm{f_T}_2+\norm{\widetilde f_T}_2\lesssim1,
\end{equation}
and \(\norm{\check\chi_T}_1\lesssim1\), uniformly in \(T\) and \(r\).
\end{lemma}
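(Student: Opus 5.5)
The plan is to take \(\chi_T\) to be a smooth bump adapted to the rectangle \(\cN_{r^{3\delta/4}}(\theta(T))\). Concretely, \(\chi_T\) equals one on \(\cN_{r^{3\delta/4}/2}(2\tau\cap\text{annulus})\), meaning the portion of the \(2\tau\)-sector lying within \(O(r^{\delta/2})\) of \(S_r^1\). It is supported in \(\cN_{r^{3\delta/4}}(\theta(T))\). It is obtained by composing a fixed bump with the affine map sending this curved cap (essentially a rectangle of dimensions \(r^{1/2}\times r^{3\delta/4}\), since the curvature sagitta over an \(r^{1/2}\) arc is \(O(1)\)) to the unit square. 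Then \(\check\chi_T\) is an \(L^1\)-normalized bump adapted to the dual rectangle, so \(\norm{\check\chi_T}_1\lesssim1\) uniformly. This also gives \eqref{eq:parent-exact-support} by construction.

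For \eqref{eq:parent-trunc-error}, I will write \(\wh f_T=\wh{\eta_1}*(\wh{M_T\mu_1}\,\sigma_r)\), where \(\sigma_r\) is the surface measure on \(S_r^1\). The measure \(\wh{M_T\mu_1}\sigma_r\) lives on \(S_r^1\) and is \(\RapDec(r)\) off \(2\tau\), with polynomially bounded mass. The factor \(\wh{\eta_1}\) is Schwartz. Hence \(\wh f_T\) is \(O_N(r^{-N}(1+\dist(\xi,2\tau\cap S_r^1))^{-N})\) outside any \(r^{\delta/2}\)-neighbourhood of \(2\tau\cap S_r^1\), and the region where \(1-\chi_T\ne0\) lies at distance \(\gtrsim r^{\delta/2}\) from that set. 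Therefore \((1-\chi_T)\wh f_T\) has weighted \(L^1\cap L^2\) norm \(O_N(r^{-N})\). Hausdorff–Young (through \(L^{6/5}\), or by bounding the \(L^2\) and \(L^\infty\) norms and interpolating after also controlling derivatives to get spatial decay) then gives the \(L^6\) bound. To make the \(L^6\) norm on all of \(\R^2\) finite, I will use that \(f_T-\widetilde f_T=f_T*(\delta-\check\chi_T)\) inherits the compact-support-plus-tail structure. A simpler route is to bound \(\norm{g}_6\le\norm{\wh g}_{6/5}\) and control \(\norm{(1-\chi_T)\wh f_T}_{6/5}\) directly by the weighted decay above.

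For \eqref{eq:parent-absolute-tail}, I will write \(\widetilde f_T=f_T-(f_T-\widetilde f_T)\). The difference term is already \(O(r^{-N})\) in \(L^6\) on all of \(\R^2\). The function \(f_T\) is supported in \(B(0,3/2)\) and is \(\RapDec(r)\) pointwise off \(2T\) by the cited stationary phase. Since \(2.1T\setminus2T\) has a margin of \(0.1\rho\gg r^{-1/2}\), we obtain \(\norm{f_T}_{L^6(\R^2\setminus2.1T)}\le C_Nr^{-N}\). The \(B(0,3)\) part is immediate, because \(f_T\) vanishes there. Finally, for \eqref{eq:parent-L2-crude} I will use Plancherel: \(\norm{\widetilde f_T}_2\le\norm{f_T}_2\) since \(0\le\chi_T\le1\). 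Also \(\norm{f_T}_2\le\norm{M_T\mu_1*\wh{\sigma_r}}_{L^2(B_{3/2})}\), which is \(\lesssim1\) by the standard bound \(\int_{B}|\nu*\wh{\sigma_r}|^2\lesssim r^{-1}\int|\wh\nu|^2\psi_r\lesssim1\) for the bounded-mass measure \(M_T\mu_1\) with Frostman exponent \(\alpha>1\), together with the normalization of \(\wh{\sigma_r}\).

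The main obstacle is the uniform quantitative form of the frequency decay of \(\wh f_T\) off the cap. One must check that the convolution with \(\wh{\eta_1}\) does not move mass from \(2\tau\cap S_r^1\) further than \(r^{\delta/2}\) except at cost \(r^{-N}\), and that the rapid decay of \(\wh{M_T\mu_1}\) off \(2\tau\) holds uniformly in \(T\). This requires the tangential separation between \(2\tau\) and the complement of \(\theta(T)\)'s neighbourhood to be \(\gtrsim r^{1/2+\delta'}\) relative to the \(r^{1/2}\) cap width, or else \(\chi_T\) must be made to equal one on the full support region of the \(\tau\)-cutoff. I would arrange the latter by choosing \(\theta(T)\) as a fixed dilate of \(\tau\).
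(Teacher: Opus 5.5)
Your proof is correct. The choice of \(\chi_T\) is the paper's: a bump adapted to an \(r^{1/2}\times r^{3\delta/4}\) box, equal to one on the \(r^{\delta/2}\)-neighbourhood of \(2\tau\cap S_r^1\) and supported near a slightly larger arc \(\theta(T)\). So are the bound \(\norm{\check\chi_T}_1\lesssim1\) and the Hausdorff--Young proof of \eqref{eq:parent-trunc-error}.

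\textbf{Absolute tail.} Here your route genuinely differs. The paper convolves \(f_T\) with the anisotropic kernel bound \eqref{eq:chi-kernel}. It then uses the transverse margin between \(2T\) and \(2.1T\), and the fixed margin between \(B(0,3/2)\) and \(B(0,3)\), to control how far the truncation spreads \(f_T\) in space. You instead observe that \eqref{eq:parent-trunc-error} is a global \(L^6(\R^2)\) bound. Writing \(\widetilde f_T=f_T-(f_T-\widetilde f_T)\) then reduces both tails in \eqref{eq:parent-absolute-tail} to the physical localization of \(f_T\) itself.

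Your version is shorter and needs no kernel decay estimate. It even gives the tail off \(2T\) rather than \(2.1T\), so the \(0.1\rho\) margin you invoke is unnecessary. The paper's kernel argument is independent of the Fourier-side decay of \(\wh f_T\) and gives pointwise control of \(\widetilde f_T\) off \(2.1T\), at the price of needing that margin.

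\textbf{\(L^2\) bounds.} For \(\widetilde f_T\), Plancherel with \(0\le\chi_T\le1\) is more direct than the paper's Young inequality. For \(\norm{f_T}_2\), your restriction-type \(L^2\) estimate works but is roundabout. The appeal to a Frostman exponent is beside the point, since \(M_T\mu_1\) is not a positive measure.

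All that is needed is \(\norm{M_T\mu_1}_1\lesssim1\), which you assert without proof. The paper obtains it from \(M_T\mu_1=\eta_T(\check\psi_{j,\tau}*\mu_1)\) and the uniform \(L^1\) bound on the multiplier kernels. Then \(\abs{\wh{\sigma_r}}\le1\) gives \(\abs{M_T\mu_1*\wh{\sigma_r}}\le\norm{M_T\mu_1}_1\) pointwise, and the compact cutoff \(\eta_1\) finishes.
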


\begin{proof}
The identity
\[
 \wh f_T=\wh{\eta_1}*
 \bigl(\wh{M_T\mu_1}\,\sigma_r\bigr)
\]
and the Schwartz decay of \(\wh{\eta_1}\) show that \(\wh f_T\) is rapidly
decreasing outside the \(r^{\delta/2}\)-neighborhood of
\(2\tau\cap S_r^1\).  Choose \(\chi_T=1\) on that neighborhood and supported
in the \(r^{3\delta/4}\)-neighborhood of the slightly larger arc
\(\theta(T)\).  It may be chosen in an anisotropic box of tangential size
\(O(r^{1/2})\) and normal size \(O(r^{3\delta/4})\).  Hausdorff--Young gives
\eqref{eq:parent-trunc-error}, and the support assertion is exact.

For later control of absolute errors, note first that
\(\norm{M_T\mu_1}_1\lesssim1\): indeed,
\(M_T\mu_1=\eta_T(\check\psi_{j,\tau}*\mu_1)\), and the inverse
multiplier kernels have uniformly bounded \(L^1\)-norm.  Hence
\(\norm{\widehat{M_T\mu_1}}_\infty\lesssim1\), so the normalized-circle
extension in \eqref{eq:parent-fT}, together with the compact cutoff
\(\eta_1\), gives \(\norm{f_T}_2\lesssim1\).  Young's inequality and the
uniform \(L^1\)-bound for \(\check\chi_T\) then give
\eqref{eq:parent-L2-crude}.

The inverse transform satisfies
\begin{equation}\label{eq:chi-kernel}
 \abs{\check\chi_T(x)}
 \le C_N r^{1/2+3\delta/4}
 \left(1+r^{1/2}\abs{x\cdot e_T^\perp}
          +r^{3\delta/4}\abs{x\cdot e_T}\right)^{-N},
\end{equation}
and has uniformly bounded \(L^1\)-norm.  Convolving with \(f_T\), which is
supported in \(B(0,3/2)\) and rapidly decreasing off \(2T\), proves
\eqref{eq:parent-absolute-tail}.  Indeed, outside \(2.1T\) the transverse
separation from \(2T\) is a fixed multiple of
\(r^{-1/2+\delta}\), so \eqref{eq:chi-kernel} gains arbitrary powers of
\(r^{-\delta}\); outside \(B(0,3)\), either the longitudinal or transverse
separation from \(B(0,3/2)\) is fixed.
\end{proof}

The preceding tails are absolute.  To obtain the relative tail required by
\cref{cor:falconer-packets}, we use an absolute small-packet cutoff.  At one
frequency scale there are at most \(Cr^{C_0}\) parent packets.  Fix \(K\)
large and discard those satisfying
\begin{equation}\label{eq:absolute-small}
 \norm{\widetilde f_T}_6<r^{-K}.
\end{equation}
For every retained packet, choose the decay order in
\eqref{eq:parent-absolute-tail} larger than \(K+200\).  Then
\begin{equation}\label{eq:parent-relative-tail}
 \norm{\widetilde f_T}_{L^6(\R^2\setminus2.1T)}
 \le r^{-200}\norm{\widetilde f_T}_6.
\end{equation}
The discarded family is shown to be negligible after
\cref{lem:local-constancy}; see \cref{lem:small-parents}.  The retained norms
range between \(r^{-K}\) and a fixed power of \(r\), so they split into
\(O(\log r)\) dyadic classes.  Each class now satisfies every hypothesis of
\cref{cor:falconer-packets}.

\section{Incidence and local constancy}
\label{sec:incidence-local}

Let \(\cW_\lambda\) be one dyadic \(L^6\)-class of retained good parent
packets from \cref{sec:direct-parents}, and put \(W=\abs{\cW_\lambda}\).
Let \(s=r^{-1/2}\) and let \(\cQ_r\) be the grid of \(s\)-squares meeting
\(B(0,2)\).  Define
\begin{equation}\label{eq:parent-multiplicity}
 m(q)=\#\{T\in\cW_\lambda:100q\cap T^{\mathrm{act}}\ne\varnothing\},
 \qquad T^{\mathrm{act}}=3T.
\end{equation}
For dyadic \(M\ge1\), set
\[
 \cQ_M=\{q\in\cQ_r:M\le m(q)<2M\},
 \qquad
 Y_M=\bigcup_{q\in\cQ_M}q.
\]

\begin{lemma}[Corrected parent incidence estimate]
\label{lem:incidence}
For all sufficiently large \(r\),
\begin{equation}\label{eq:incidence}
 \mu_2\bigl(\cN_{5s}(Y_M)\bigr)
 \lesssim\frac{W}{M}\,r^{-1/2+c_\alpha\delta}.
\end{equation}
\end{lemma}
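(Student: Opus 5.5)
The proof is a double-counting argument. We count pairs (point of \(\cN_{5s}(Y_M)\), good parent tube whose goodness strip contains that point) with respect to \(\mu_2\). Each point of the neighborhood lies in many goodness strips, and each good tube carries little \(\mu_2\)-mass on its goodness strip.

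\emph{Step 1: multiplicity transfers to the neighborhood.} Let \(x\in\cN_{5s}(Y_M)\). Then \(x\) lies within \(5s\) of some \(q\in\cQ_M\), so \(\abs{x-z}\le 5s+\sqrt2 s\) for every \(z\in q\). Suppose \(T\in\cW_\lambda\) has \(100q\cap T^{\mathrm{act}}\neq\varnothing\), and pick \(z'\) in that intersection. Then
\[
 \dist(z',x)\le 100\sqrt2\,s+6.5s\le 150s,
\]
and since \(\dist(z',\ell_T)\le3\rho_j\),
\[
 \dist(x,\ell_T)\le 3\rho_j+150s .
\]
By \eqref{eq:Agood-choice} we have \((A_{\mathrm{good}}-3)\rho_j\ge (A_{\mathrm{good}}-3)c_\rho r^{-1/2+\delta}>300\, r^{\delta}s\ge150s\). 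Hence \(x\in T^{\mathrm{good}}\). Consequently every \(x\in\cN_{5s}(Y_M)\) lies in at least \(m(q)\ge M\) of the strips \(T^{\mathrm{good}}\), \(T\in\cW_\lambda\). We use the infinite goodness strips here, so no boundary issue arises from the restriction to \(B(0,2)\). If one prefers unit-length strips, \(\mu_2\) is supported in the localized piece and all relevant points lie in \(B(0,2)\) anyway.

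\emph{Step 2: sum over tubes using goodness.} Every \(T\in\cW_\lambda\) is good, so \eqref{eq:bad-definition} fails:
\[
 \mu_2(T^{\mathrm{good}})<R_j^{-1/2+c_\alpha\delta}.
\]
Integrating the pointwise bound from Step 1 gives
\[
 M\,\mu_2\bigl(\cN_{5s}(Y_M)\bigr)
 \le\int\sum_{T\in\cW_\lambda}\mathbf 1_{T^{\mathrm{good}}}\,d\mu_2
 =\sum_{T}\mu_2(T^{\mathrm{good}})
 < W R_j^{-1/2+c_\alpha\delta}.
\]
Since \(R_j\sim r\), the factor \(R_j^{-1/2+c_\alpha\delta}\) is comparable to \(r^{-1/2+c_\alpha\delta}\) up to a constant depending on the fixed comparability ratio. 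Dividing by \(M\) gives \eqref{eq:incidence}.

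\emph{Main obstacle.} The only delicate point is Step 1. We must check that the geometric slack \(100q\) plus the \(5s\)-neighborhood is absorbed by the gap between the activity width \(3\rho_j\) and the goodness width \(A_{\mathrm{good}}\rho_j\). This is exactly what the choice \eqref{eq:Agood-choice} guarantees, with room to spare, since \(s\ll \rho_j\) by the factor \(r^{\delta}\). One must also make sure the parent widths \(\rho_j\) in the dyadic class all satisfy the common lower bound \(\rho_j\ge c_\rho r^{-1/2+\delta}\), which holds because \(R_j\sim r\). No decoupling input is needed for this lemma.
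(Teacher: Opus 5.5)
Your proof is correct and takes the same double-counting route as the paper. The slack $(A_{\mathrm{good}}-3)\rho_j\gg s$ puts every point of $\mathcal{N}_{5s}(Y_M)$ in the goodness strip of each of the $\ge M$ incident tubes, and goodness bounds the total. The only difference is cosmetic: you count pointwise, $M\mathbf{1}_{\mathcal{N}_{5s}(Y_M)}\le\sum_T\mathbf{1}_{T^{\mathrm{good}}}$, and integrate, whereas the paper sums $\mu_2(200q)$ over incident pairs $(q,T)$ and uses $200q\subset T^{\mathrm{good}}$ with bounded overlap of $\{200q\}$, so your version avoids the overlap constant.
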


\begin{proof}
For each grid square put
\[
 \cW(q)=\{T\in\cW_\lambda:100q\cap3T\ne\varnothing\}.
\]
Define
\[
 J=\sum_{q\in\cQ_M}\sum_{T\in\cW(q)}\mu_2(200q).
\]
Each \(q\in\cQ_M\) has at least \(M\) incidences.  Also, \(200q\)
contains the \(5s\)-neighborhood of \(q\).  Hence
\[
 J\ge M\mu_2(\cN_{5s}(Y_M)).
\]
For a fixed incident \(T,q\), choose \(z\in100q\cap3T\).  If \(y\in200q\),
then \(\abs{y-z}\le213s\).  Since
\(\rho_j\ge c_\rho sr^\delta\), \eqref{eq:Agood-choice} gives
\[
 \dist(y,\ell_T)\le3\rho_j+213s
 <A_{\mathrm{good}}\rho_j
\]
for large \(r\).  Thus \(200q\subset T^{\mathrm{good}}\).  The grid family
\(\{200q\}\) has bounded overlap, so goodness gives
\[
 \sum_{q:\,100q\cap3T\ne\varnothing}\mu_2(200q)
 \lesssim\mu_2(T^{\mathrm{good}})
 \lesssim r^{-1/2+c_\alpha\delta}.
\]
Sum over \(T\) and compare with the lower bound.
\end{proof}

For large \(N\), let
\[
 \Phi_r(x)=C_Nr^2(1+r\abs{x})^{-N},\qquad \int\Phi_r=1.
\]

\begin{lemma}[Localized convolution with set enlargement]
\label{lem:local-constancy}
Let \(H\in L^2(\R^2)\) satisfy
\(\supp\wh H\subset B(0,Cr)\), let \(\mu\) be a probability measure, and let
\(Y\) be a union of \(s\)-squares.  Put
\[
 Y^+=\cN_{r^{-3/4}}(Y).
\]
For every \(K>0\), if \(N\) is sufficiently large, then
\begin{equation}\label{eq:local-convolution}
 \int_Y\abs{H}^2\,d\mu
 \lesssim_C
 \int_{Y^+}\abs{H(x)}^2(\mu*\Phi_r)(x)\,dx
 +r^{-K}\norm{H}_2^2.
\end{equation}
If \(\mu(B(x,t))\le C_\mu t^\alpha\), then
\begin{align}
 \norm{\mu*\Phi_r}_\infty
 &\lesssim r^{2-\alpha},
 \label{eq:mollifier-Linfty}\\
 \int_{Y^+}\mu*\Phi_r
 &\le\mu\bigl(\cN_{2s}(Y)\bigr)+O_K(r^{-K}).
 \label{eq:mollifier-mass}
\end{align}
\end{lemma}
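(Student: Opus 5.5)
We prove \eqref{eq:local-convolution} by a reproducing identity and Cauchy--Schwarz, then read off the two mollifier bounds from the Frostman condition and the rapid decay of \(\Phi_r\).

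\emph{Reproducing identity.} Choose a Schwartz \(\varphi\) with \(\wh\varphi=1\) on \(B(0,C)\), and put \(\varphi_r(x)=r^2\varphi(rx)\). Then \(H=H*\varphi_r\), and \(\abs{\varphi_r}\lesssim_N r^2(1+r\abs{x})^{-2N}\). Write \(\abs{\varphi_r}=w^2\) with
\[
 w(x)\lesssim r(1+r\abs{x})^{-N}.
\]
Cauchy--Schwarz gives, pointwise,
\[
 \abs{H(x)}^2\le\norm{\varphi_r}_1\int\abs{H(z)}^2\abs{\varphi_r(x-z)}\,dz
 \lesssim\int\abs{H(z)}^2\,\Phi_r(x-z)\,dz,
\]
after enlarging constants so that \(\abs{\varphi_r}\lesssim\Phi_r\). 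Integrating over \(Y\) against \(\mu\) and using Fubini,
\[
 \int_Y\abs{H}^2\,d\mu\lesssim\int\abs{H(z)}^2\,\bigl((\mu|_Y)*\Phi_r\bigr)(z)\,dz .
\]

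\emph{Splitting off \(Y^+\).} We split the \(z\)-integral into \(z\in Y^+\) and \(z\notin Y^+\).
\begin{itemize}
 \item On \(Y^+\) we use \((\mu|_Y)*\Phi_r\le\mu*\Phi_r\).
 \item Off \(Y^+\), every \(x\in Y\) satisfies \(\abs{x-z}\ge r^{-3/4}\), so \(r\abs{x-z}\ge r^{1/4}\). Hence
 \[
  (\mu|_Y)*\Phi_r(z)\le C_Nr^2r^{-N/4}.
 \]
 This is at most \(r^{-K}\) once \(N>4(K+2)\), and it contributes at most \(r^{-K}\norm{H}_2^2\).
\end{itemize}
This is the one place where a genuine margin is needed. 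Taking the neighborhood \(r^{-3/4}\), rather than \(r^{-1}\), gains the power \(r^{1/4}\) in the argument of \(\Phi_r\), and this converts into arbitrary polynomial decay by taking \(N\) large. I expect this to be the main point to check; the rest is routine.

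\emph{Proof of \eqref{eq:mollifier-Linfty}.} Decompose dyadically around \(x\):
\[
 \mu*\Phi_r(x)\le\sum_{k\ge0}C_Nr^2\,2^{-kN}\mu\bigl(B(x,2^{k+1}r^{-1})\bigr).
\]
The Frostman bound gives \(\mu(B(x,2^{k+1}r^{-1}))\lesssim 2^{k\alpha}r^{-\alpha}\). The series converges for \(N>\alpha\), so the sum is \(\lesssim r^{2-\alpha}\).

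\emph{Proof of \eqref{eq:mollifier-mass}.} By Fubini,
\[
 \int_{Y^+}\mu*\Phi_r=\int\Phi_r*\mathbf 1_{Y^+}\,d\mu.
\]
This integrand is at most \(1\) everywhere. We split according to the position of \(y\).
\begin{itemize}
 \item For \(y\in\cN_{2s}(Y)\) we use the bound \(1\).
 \item For \(y\notin\cN_{2s}(Y)\), the distance from \(y\) to \(Y^+\) is at least \(2s-r^{-3/4}\ge s=r^{-1/2}\). Then
 \[
  \Phi_r*\mathbf 1_{Y^+}(y)\le\int_{\abs{u}\ge r^{-1/2}}\Phi_r\lesssim r^{-(N-2)/2}\le r^{-K}.
 \]
 Since \(\mu\) is a probability measure, this part contributes \(O_K(r^{-K})\).
\end{itemize}
Adding the two parts gives \eqref{eq:mollifier-mass}.
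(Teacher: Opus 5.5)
Your proof is correct and follows the paper's argument essentially step for step. The shared steps are the reproducing identity $H=H*\varphi_r$, Cauchy--Schwarz and Fubini to reach $(\mu|_Y)*\Phi_r$, the $r^{1/4}$ gain off $Y^+$ giving $O(r^{-K})$, dyadic annuli for the $L^\infty$ bound, and Fubini with the rapid tail of $\Phi_r$ for the mass bound.

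You also make explicit the exponent choices $N>4(K+2)$ and $N\ge 2K+2$, which the paper leaves implicit. The factorization $\abs{\varphi_r}=w^2$ is never used and can be dropped.
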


\begin{proof}
Choose a Schwartz function \(\varphi\) with
\(\wh\varphi=1\) on \(B(0,2C)\) and put
\(\varphi_r(x)=r^2\varphi(rx)\).  Then \(H=H*\varphi_r\).
Cauchy--Schwarz, integration against \(\mu|_Y\), and Fubini give
\[
 \int_Y\abs{H}^2\,d\mu
 \lesssim
 \int\abs{H(z)}^2
 ((\mu|_Y)*\abs{\varphi_r})(z)\,dz.
\]
Outside \(Y^+\) the kernel is \(O_K(r^{-K})\); on \(Y^+\) it is bounded by
a multiple of \(\mu*\Phi_r\).  The \(L^\infty\) estimate follows by dyadic
annuli.  For \eqref{eq:mollifier-mass}, use Fubini and the rapid tail of
\(\Phi_r\).
\end{proof}

Let \(Y_M^{\mathrm{cov}}\) be the union of all grid squares \(q'\) meeting
\(Y_M^+\).  Then, for some \(q\in\cQ_M\),
\begin{equation}\label{eq:cover-four}
 q'\subset4q,\qquad 60q'\subset100q.
\end{equation}
Consequently
\begin{equation}\label{eq:direct-multiplicity}
 \#\{T\in\cW_\lambda:3T\cap60q'\ne\varnothing\}
 \le m(q)<2M.
\end{equation}
Thus \cref{cor:falconer-packets} applies on \(Y_M^{\mathrm{cov}}\) with
multiplicity \(2M\).

\begin{lemma}[Absolute small parent packets are negligible]
\label{lem:small-parents}
Let \(\cS\) contain at most \(Cr^{C_0}\) functions with Fourier support in
\(B(0,2r)\), \(L^6\)-norm below \(r^{-K}\), and
\(L^2\)-norm at most \(r^{C_1}\).  If \(K\) is chosen after a desired
\(N\), then
\[
 \int_{B(0,2)}
 \left|\sum_{T\in\cS}\widetilde f_T\right|^2\,d\mu_2
 =O_N(r^{-N}).
\]
\end{lemma}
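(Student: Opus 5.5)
We use local constancy (\cref{lem:local-constancy}) to convert the $\mu_2$-integral into a Lebesgue integral against $\mu_2*\Phi_r$. The $L^\infty$ bound for the mollified measure then reduces everything to an $L^2$ norm of the sum. That norm we control by interpolating the small $L^6$ norms against the crude $L^2$ norms.

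Put $H=\sum_{T\in\cS}\widetilde f_T$. Each summand has Fourier support in $B(0,2r)$, so $H$ does too. The hypothesis $\norm{\widetilde f_T}_2\le r^{C_1}$ and the triangle inequality give $\norm{H}_2\le Cr^{C_0+C_1}$. Apply \eqref{eq:local-convolution} with $Y$ the union of $s$-squares covering $B(0,2)$ and $\mu=\mu_2$. Then bound $\mu_2*\Phi_r$ by \eqref{eq:mollifier-Linfty}; the bound $\norm{\mu_2*\Phi_r}_\infty\lesssim r^{2}$ already suffices. This yields
\[
 \int_{B(0,2)}\abs{H}^2\,d\mu_2
 \lesssim r^{2}\int_{Y^+}\abs{H}^2\,dx+r^{-K'}r^{2C_0+2C_1},
\]
where $K'$ is the free decay order in \cref{lem:local-constancy}. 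We take $K'\ge N+2C_0+2C_1$.

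Since $Y^+\subset B(0,3)$ has bounded Lebesgue measure, H\"older gives $\int_{Y^+}\abs{H}^2\lesssim\norm{H}_{L^6}^2$. The triangle inequality then gives $\norm{H}_6\le Cr^{C_0}r^{-K}$. Hence the main term is $O(r^{2+2C_0-2K})$, which is $O(r^{-N})$ once $K\ge N/2+C_0+1$. This is the sense in which $K$ is chosen after $N$.

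The only delicate point is that all the exponents $C_0$, $C_1$ and the mollifier bound are fixed before $K$, so that no circularity arises. For the Falconer application one also needs the discarded packets to satisfy the hypotheses. The Fourier support condition follows from \eqref{eq:parent-exact-support}, since $\theta(T)\subset S^1_r$. The $L^2$ bound follows from \eqref{eq:parent-L2-crude}, which even gives $C_1=0$. The count $Cr^{C_0}$ was recorded in \cref{sec:direct-parents}. No orthogonality or multiplicity information is needed, because the polynomial losses are absorbed by the absolute cutoff.
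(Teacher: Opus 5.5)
Your proof is correct and follows the paper's own argument. You use Minkowski to get \(\norm{\sum_{T\in\cS}\widetilde f_T}_6\lesssim r^{C_0-K}\), apply \cref{lem:local-constancy} on a covering of \(B(0,2)\) with the decay order fixed from \(C_0\), \(C_1\), \(N\), then use H\"older with \(K\) chosen last. The only difference is cosmetic: you bound \(\mu_2*\Phi_r\) in \(L^\infty\) and use \(\abs{Y^+}\lesssim1\), whereas the paper interpolates the \(L^\infty\) bound with total mass one to get the smaller factor \(r^{(2-\alpha)/3}\); since \(K\) is chosen after everything else, either polynomial loss is harmless.
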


\begin{proof}
Minkowski gives
\[
 \left\|\sum_{T\in\cS}\widetilde f_T\right\|_6
 \lesssim r^{C_0-K}.
\]
Apply \cref{lem:local-constancy} on a fixed covering of \(B(0,2)\).
Using \eqref{eq:mollifier-Linfty}, total mass one, and H\"older,
\[
 \int_{B(0,2)}|H|^2\,d\mu_2
 \lesssim r^{(2-\alpha)/3}\norm{H}_6^2
 +r^{-K_1}\norm{H}_2^2.
\]
Here \(\norm{H}_2\lesssim r^{C_0+C_1}\).  Choose the local-constancy
decay order \(K_1>2(C_0+C_1)+N\), and then choose the absolute packet
cutoff \(K>C_0+(2-\alpha)/6+N/2\).  Both terms are \(O_N(r^{-N})\).
\end{proof}

\section{The corrected principal estimate}
\label{sec:principal}

For \(r>10R_0\), convolution with \(\wh{\sigma_r}\) kills the
low-frequency term, and only \(O(1)\) scales \(R_j\sim r\) remain.
On \(B(0,1)\),
\[
 \mu_{1,\mathrm{good}}*\wh{\sigma_r}
 =\sum_{R_j\sim r}\sum_\tau
   \sum_{\substack{T\in\mathbb T_{j,\tau}\\T\ \mathrm{good}}}
   f_T+\RapDec(r).
\]
Replace \(f_T\) by \(\widetilde f_T\) using
\eqref{eq:parent-trunc-error}, discard the absolute small class by
\cref{lem:small-parents}, split into the \(O(1)\) scale families
\(R_j\sim r\), and dyadically pigeonhole the retained
\(\norm{\widetilde f_T}_6\).  Fix one scale and one norm class
\(\cW_\lambda\), write
\[
 \widetilde f_\lambda
 =\sum_{T\in\cW_\lambda}\widetilde f_T,
 \qquad W=\abs{\cW_\lambda},
\]
and use the multiplicity classes of \cref{sec:incidence-local}.

Put
\[
 A_M=\mu_2(\cN_{5s}(Y_M)).
\]
By \cref{lem:local-constancy}, H\"older with exponents \(3\) and \(3/2\),
and \eqref{eq:mollifier-Linfty}--\eqref{eq:mollifier-mass},
\begin{equation}\label{eq:holder-local}
 \int_{Y_M}\abs{\widetilde f_\lambda}^2\,d\mu_2
 \lesssim
 \norm{\widetilde f_\lambda}_{L^6(Y_M^{\mathrm{cov}})}^2
 r^{(2-\alpha)/3}(A_M+r^{-K})^{2/3}
 +O_N(r^{-N}).
\end{equation}
The \(r^{-K}\) term is rapidly decreasing after increasing \(K\), because
the remaining factors are polynomial in \(r\).

Apply \cref{cor:falconer-packets} at \(p=6\).  Exact Fourier support is
\eqref{eq:parent-exact-support}, relative physical concentration is
\eqref{eq:parent-relative-tail}, and the covering-square multiplicity is
\eqref{eq:direct-multiplicity}.  Hence
\begin{equation}\label{eq:direct-refined}
 \norm{\widetilde f_\lambda}_{L^6(Y_M^{\mathrm{cov}})}^2
 \lesssim r^\eps
 \left(\frac{M}{W}\right)^{2/3}
 \sum_{T\in\cW_\lambda}\norm{\widetilde f_T}_6^2.
\end{equation}
By \cref{lem:incidence},
\begin{equation}\label{eq:cancellation-factor}
 \left(\frac{M}{W}\right)^{2/3}A_M^{2/3}
 \lesssim r^{-1/3+\frac23c_\alpha\delta}.
\end{equation}

It remains to estimate one parent packet.  Since
\(\widetilde f_T\) is rapidly decreasing off \(2.1T\), whose area in the
observation ball is \(O(r^{-1/2+\delta})\), and
\(\norm{\check\chi_T}_1\lesssim1\),
\[
 \norm{\widetilde f_T}_6
 \lesssim r^{-1/12+\delta/6}\norm{f_T}_\infty+\RapDec(r).
\]
Writing the circle extension in \eqref{eq:parent-fT} and using
Cauchy--Schwarz on the \(O(r^{-1/2})\)-fraction of normalized circle measure
meeting \(2\tau\),
\[
 \norm{f_T}_\infty
 \lesssim r^{-1/4}
 \norm{\wh{M_T\mu_1}}_{L^2(d\sigma_r)}
 +\RapDec(r).
\]
Therefore, with
\[
 A_T=\norm{\wh{M_T\mu_1}}_{L^2(d\sigma_r)},
\]
\begin{equation}\label{eq:parent-L6-square}
 \norm{\widetilde f_T}_6^2
 \lesssim r^{-2/3+\delta/3}A_T^2+\RapDec(r).
\end{equation}

Combining \eqref{eq:holder-local}--\eqref{eq:parent-L6-square} gives
\begin{equation}\label{eq:class-estimate}
 \int_{Y_M}\abs{\widetilde f_\lambda}^2\,d\mu_2
 \lesssim
 r^{-\frac{\alpha+1}{3}+\eps+C_\alpha\delta}
 \sum_{T\in\cW_\lambda}A_T^2
 +O_N(r^{-N}).
\end{equation}
There are \(O(\log r)\) packet-norm classes and \(O(\log r)\) nonzero
multiplicity classes.  If \(m(q)=0\), then \(100q\) misses \(3T\) for every
retained parent; the relative tail outside \(2.1T\), with the fixed margin
between \(2.1T\) and \(3T\), gives a rapidly decreasing contribution.
Cauchy--Schwarz over the class sums and absorption of logarithms into
\(r^\eps\) yield
\begin{equation}\label{eq:all-parents}
 \int_{E_2}\abs{\mu_{1,\mathrm{good}}*\wh{\sigma_r}}^2\,d\mu_2
 \lesssim
 r^{-\frac{\alpha+1}{3}+\eps+C_\alpha\delta}
 \sum_{R_j\sim r}\sum_\tau\sum_{T\in\mathbb T_{j,\tau}}A_T^2
 +O_N(r^{-N}).
\end{equation}

\begin{lemma}[Approximate orthogonality]
\label{lem:orthogonality}
There is a rapidly decreasing weight \(\psi_r\), comparable to one on
\(\{\xi:\abs{\abs{\xi}-r}\le1\}\), such that
\begin{equation}\label{eq:orthogonality-final}
 \sum_{R_j\sim r}\sum_\tau\sum_{T\in\mathbb T_{j,\tau}}
 \norm{\wh{M_T\mu_1}}_{L^2(d\sigma_r)}^2
 \lesssim r^{-1}
 \int_{\R^2}\abs{\wh\mu_1(\xi)}^2\psi_r(\xi)\,d\xi.
\end{equation}
\end{lemma}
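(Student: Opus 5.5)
We prove \eqref{eq:orthogonality-final} one scale \(R_j\sim r\) at a time; only \(O(1)\) such scales occur. Within a scale we sum the two layers of the GIOW decomposition, the spatial tubes \(T\) over each angular sector \(\tau\) and then the sectors \(\tau\), and use almost orthogonality in each. Recall that \(M_T\mu_1=\eta_T(\check\psi_{j,\tau}*\mu_1)\), where \(\{\eta_T\}_{T\in\mathbb T_{j,\tau}}\) is a smooth partition of unity adapted to the tubes in direction \(\tau\), and \(\psi_{j,\tau}\) is the frequency cutoff to the annular sector \(\{\abs{\xi}\sim R_j,\ \xi/\abs{\xi}\in\tau\}\).

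\textbf{Step 1: replace the circle by a unit neighborhood.} First we pass from \(L^2(d\sigma_r)\) to a solid integral. Since \(\eta_T\) is Schwartz at spatial scale \(O(1)\) along \(T\), \(\wh{M_T\mu_1}=\wh{\eta_T}*\wh{(\check\psi_{j,\tau}*\mu_1)}\) is locally constant at unit frequency scale. A standard local-constancy bound then gives
\[
 \norm{G}_{L^2(d\sigma_r)}^2\lesssim r^{-1}\int\abs{G(\xi)}^2\,w_r(\xi)\,d\xi,
\]
where \(w_r\) is a rapidly decreasing weight concentrated on \(\{\abs{\abs\xi-r}\lesssim1\}\). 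The factor \(r^{-1}\) is the normalization of \(\sigma_r\): its total mass is one and it is spread over a circle of length \(\sim r\). To justify this, write \(G=G*\phi\) with \(\phi\) the Fourier transform of a bump equal to one on the physical support of \(M_T\mu_1\). Here we must check that \(\eta_T\) has spatial support of diameter \(O(1)\), which holds because the tubes lie in a fixed ball. Cauchy--Schwarz then finishes the bound, with \(w_r=\sigma_r*\abs{\phi}\) and no further losses.

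\textbf{Step 2: sum over \(T\) for fixed \(\tau\).} By Plancherel, \(\int\abs{\wh{M_T\mu_1}}^2w_r\) equals \(\int\abs{\eta_T\,(\check\psi_{j,\tau}*\mu_1)}^2\) tested against the kernel \(\check w_r\). Here \(\abs{\check w_r}\) has size about \(r\cdot\) (a unit-scale bump), times oscillation. Two facts control the sum over \(T\): \(\sum_T\abs{\eta_T}^2\lesssim1\), and the \(\eta_T\) have bounded overlap with Schwartz tails at their own scale. Using them, and the local constancy of \(\check w_r\) at unit scale, we sum in \(T\) and obtain
\[
 \sum_T\int\abs{\wh{M_T\mu_1}}^2w_r\lesssim\int\abs{\wh{\mu_1}}^2\abs{\psi_{j,\tau}}^2\,(w_r*\abs{\wh{\eta}})\,d\xi+\RapDec(r).
\]
The simpler route is to carry out the whole computation on the frequency side. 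Write \(\wh{M_T\mu_1}=\wh{\eta_T}*(\psi_{j,\tau}\wh\mu_1)\). Since each \(\wh{\eta_T}\) is concentrated in a ball of radius \(O(r^{1/2-\delta})\) after modulation, the family \(\{\eta_T\}\) satisfies the square-function bound \(\sum_T\norm{\eta_T h}_{L^2(\nu)}^2\lesssim\norm{h}^2_{L^2(\nu*\text{bump})}\) for positive weights \(\nu\). This is just pointwise \(\sum\abs{\eta_T}^2\lesssim1\) applied on the physical side with weight \(\check{}\)-side localized.

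\textbf{Step 3: sum over \(\tau\) and scales.} Finally \(\sum_\tau\abs{\psi_{j,\tau}}^2\lesssim1\) and \(\sum_{j:R_j\sim r}\le O(1)\). Collecting all the weights into a single \(\psi_r:=\sum w_r*\text{(bumps)}\) gives \eqref{eq:orthogonality-final}. This \(\psi_r\) is comparable to one on the unit neighborhood of \(S^1_r\) and rapidly decreasing away from it, since every convolving bump has unit scale up to Schwartz tails.

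\textbf{Main obstacle.} The delicate point is Step 2: making the physical-side orthogonality of \(\{\eta_T\}\) rigorous when the weight is the oscillatory \(\check w_r\) rather than a positive function. My first attempt is to dominate \(\abs{\check w_r}\) by a positive function \(r\,\Theta\), where \(\Theta\) is a unit-scale bump. Since all \(M_T\mu_1\) are supported in a fixed ball, this gives
\[
 \sum_T\norm{\wh{M_T\mu_1}}_{L^2(d\sigma_r)}^2\lesssim r^{-1}\int\bigl|\wh{\mu_1}\psi_{j,\tau}\bigr|^2*\bigl(\text{unit bump}\bigr)\cdot\mathbf 1_{\mathcal N_1(S_r^1)}\,d\xi+\RapDec(r),
\]
which is the desired form after Fubini.
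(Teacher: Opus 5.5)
\textbf{There is a genuine gap, and it is in Step~2.} That step is the whole content of the lemma; the paper itself simply defers to \cite[(5.3)]{GIOW}. Your justification there does not work.

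A positive weight $w_r$ on the frequency side becomes the oscillating kernel $\check w_r$ on the physical side. The bound $\sum_T\abs{\eta_T}^2\lesssim1$ gives orthogonality only against positive physical-side weights. Also, $\check w_r$ is not locally constant at unit scale: it oscillates at frequency $r$ and is merely localized at unit scale.

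Your proposed remedy, $\abs{\check w_r}\le r\Theta$, gives by Schur's test only $\int\abs{\wh F}^2w_r\lesssim r\norm{F}_2^2$. Combined with Step~1, this recovers just the trivial bound $\sum_T\norm{\wh{M_T\mu_1}}_{L^2(d\sigma_r)}^2\lesssim\int\abs{\psi_{j,\tau}\wh\mu_1}^2$ over the whole sector. Your displayed conclusion with $\mathbf 1_{\cN_1(S^1_r)}$ cannot be obtained this way. Once absolute values are taken, the bound sees $w_r$ only through $\abs{\check w_r}$, which is essentially the same for every shell of radius comparable to $R_j$. The information selecting the unit neighborhood of $S^1_r$ is therefore lost, at a cost of up to a factor $r^2$ when $\abs{\wh\mu_1}^2$ is spread radially.

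\textbf{The missing idea is geometric.} Multiplication by $\eta_T$ convolves $\psi_{j,\tau}\wh\mu_1$ with $\wh{\eta_T}$. This is concentrated neither in a ball of radius $r^{1/2-\delta}$, as you write, nor at unit scale, as Step~3 assumes. It lives in a $1\times r^{1/2-\delta}$ rectangle whose short side is along $\omega_\tau$ and whose long side is tangent to $S^1_r$ at $\tau$. The estimate survives only because the circle stays within $O(1)$ of its tangent over tangential length $O(r^{1/2})$.

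If the tubes in $\mathbb T_{j,\tau}$ were transverse to $\omega_\tau$, frequencies at distance $\sim r^{1/2-\delta}$ from $S^1_r$ would be moved onto it, and the corresponding inequality would fail. So no argument using only $\sum_T\abs{\eta_T}^2\lesssim1$, bounded overlap and unit-scale local constancy can succeed.

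\textbf{One way to close the step.} Suppose the partition has the product form $\eta_T(x)=a(\rho^{-1}x\cdot\omega_\tau^\perp-k)\,b(x\cdot\omega_\tau)$. Apply Parseval to the $\rho^{-1}$-periodization in the tangential frequency (Poisson summation in $k$), then Cauchy--Schwarz in the radial frequency. This gives, pointwise in $\xi$,
\[
 \sum_{T\in\mathbb T_{j,\tau}}\abs{\wh{M_T\mu_1}(\xi)}^2
 \lesssim\bigl(\abs{\psi_{j,\tau}\wh\mu_1}^2*K_\tau\bigr)(\xi),
\]
where $K_\tau$ is the $L^1$-normalized profile of that rectangle. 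Next, integrate against $\sigma_r$ and use $K_\tau*\sigma_r\lesssim r^{-1}(1+\abs{\abs{\xi}-r})^{-N}$ on the support of $\psi_{j,\tau}$; this is exactly where the tangential alignment enters. Finally, sum with $\sum_\tau\psi_{j,\tau}^2\lesssim1$.

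This yields the lemma with every tail written as a weighted integral. The absolute $\RapDec(r)$ term you carry is not permitted by the stated bound, and this route does not produce one.
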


\begin{proof}
This is the estimate proved in \cite[(5.3), pp.~35--37]{GIOW}.  Its proof
uses local constancy at unit frequency scale, enlarged cutoffs, Plancherel,
and bounded overlap; it does not use the disputed tube multiplicity.
\end{proof}

Substitute \eqref{eq:orthogonality-final} into
\eqref{eq:all-parents} and choose \(\delta\) so that
\(C_\alpha\delta<\eps\).  This proves \eqref{eq:main-frequency}, after
renaming the small exponent.  The range \(r\le10R_0\) is handled by the
elementary low-frequency estimate in \cite[Section~5]{GIOW}.

Integrating in \(r\) and using the Fourier representation of energy gives
\[
 \int_{E_2}\norm{d_x{}_*(\mu_{1,\mathrm{good}})}_{L^2}^2\,d\mu_2(x)
 \lesssim I_\gamma(\mu_1)+1,
 \qquad
 \gamma=2-\frac{\alpha+1}{3}+\eps.
\]
Since \(I_\gamma(\mu_1)<\infty\) whenever \(\gamma<\alpha\), the good part
closes for every \(\alpha>5/4\).  Together with
\cref{prop:bad-L1}, this proves the smoothed theorem.
\section{Uniform regularization and return to the original set}\label{sec:regularization}

The selected good part is initially most naturally a distribution, and the low-frequency term has a Schwartz tail. We give a uniform limiting argument that avoids treating it as a compactly supported finite measure without proof.

Choose a nonnegative $\varrho\in C_c^\infty(B(0,1))$ with integral one and set
\[
 \mu_i^\nu=\mu_i*\varrho_\nu,
 \qquad \varrho_\nu(x)=\nu^{-2}\varrho(x/\nu).
\]
For small $\nu$, the supports remain separated by a fixed positive distance, and every vector from $\supp\mu_1^\nu$ to $\supp\mu_2^\nu$ lies in a fixed slightly larger arc containing $\Omega$.

\begin{lemma}[Uniform Frostman bound]\label{lem:uniform-frostman}
If $\mu_i(B(x,t))\le C_\mu t^\alpha$ for $0<t\le1$, then
\[
 \mu_i^\nu(B(x,t))\le C'_\mu t^\alpha
\]
for all $0<t\le1$ and all sufficiently small $\nu$, with $C'_\mu$ independent of $\nu$.
\end{lemma}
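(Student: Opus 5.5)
The estimate follows from Fubini and the normalization of \(\varrho\), after splitting into two cases according to the relative size of \(t\) and \(\nu\). Write
\[
 \mu_i^\nu(B(x,t))=\int\!\!\int \mathbf 1_{B(x,t)}(y+z)\,\varrho_\nu(z)\,dz\,d\mu_i(y)
 =\int \mu_i\bigl(B(x-z,t)\bigr)\varrho_\nu(z)\,dz .
\]
Here we regard \(\mu_i^\nu\) as the measure with density \(\mu_i*\varrho_\nu\), and use \(\mathbf 1_{B(x,t)}(y+z)=\mathbf 1_{B(x-z,t)}(y)\).

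In the first case, \(t\ge\nu\). Every ball \(B(x-z,t)\) has radius \(t\le1\), so the hypothesis gives \(\mu_i(B(x-z,t))\le C_\mu t^\alpha\) for every \(z\). Since \(\varrho_\nu\ge0\) and \(\int\varrho_\nu=1\), we obtain \(\mu_i^\nu(B(x,t))\le C_\mu t^\alpha\). This argument does not actually need \(t\ge\nu\); the case split is used only to handle the second case efficiently.

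In the second case, \(t<\nu\). Here we use the density bound instead. The density satisfies
\[
 \norm{\mu_i*\varrho_\nu}_\infty\le\norm{\varrho}_\infty\,\nu^{-2}\,\sup_w\mu_i(B(w,\nu))\le \norm{\varrho}_\infty C_\mu\nu^{\alpha-2},
\]
because \(\varrho_\nu\) is supported in \(B(0,\nu)\) and \(\nu\le1\). Hence
\[
 \mu_i^\nu(B(x,t))\le \pi t^2\,\norm{\varrho}_\infty C_\mu\nu^{\alpha-2}
 =\pi\norm{\varrho}_\infty C_\mu\,t^\alpha\,(t/\nu)^{2-\alpha}\le \pi\norm{\varrho}_\infty C_\mu\,t^\alpha,
\]
using \(\alpha<2\) and \(t<\nu\). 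If \(\alpha=2\), the same bound holds trivially.

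Either case alone already suffices. The first bound in fact holds for all \(t\le1\), so one may take \(C'_\mu=C_\mu\), which is independent of \(\nu\). The only point to check is that the Frostman hypothesis is applied at radius \(t\le1\) centred at the shifted point \(x-z\), and this is permitted because the hypothesis holds uniformly in the centre. The second case is included because it gives the \(L^\infty\) density bound that is reused elsewhere, for instance in \eqref{eq:mollifier-Linfty}. The normalization \(\int\varrho=1\) and the restriction to sufficiently small \(\nu\) (for instance \(\nu\le1\)) are all that is needed, and there is no genuine obstacle.
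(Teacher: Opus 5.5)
Your proof is correct, and your first argument is a cleaner route than the paper's. The paper splits into two cases:
\begin{itemize}
\item for $t\ge\nu$ it uses only $\supp\varrho_\nu\subset B(0,\nu)$, enlarging the ball to get $\mu_i^\nu(B(x,t))\le\mu_i(B(x,t+\nu))\lesssim t^\alpha$;
\item for $t<\nu$ that enlargement is too lossy, so it must pass through the density bound $\mu_i^\nu\lesssim\nu^{\alpha-2}$ together with the convention $\alpha<2$.
\end{itemize}
Your averaging identity $\mu_i^\nu(B(x,t))=\int\mu_i(B(x-z,t))\varrho_\nu(z)\,dz$ exploits that $\varrho_\nu$ is a probability density, not just where it is supported. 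It therefore gives the bound for every $t\le1$ with $C'_\mu=C_\mu$. It needs no case split, no smallness of $\nu$, no restriction $\alpha<2$, and never applies the Frostman hypothesis at the enlarged radius $t+\nu$.

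The paper's route gains only the pointwise density bound as a by-product. Your second case reproduces that bound, but it is redundant for this lemma. Contrary to your remark, it is also not what \eqref{eq:mollifier-Linfty} relies on: that estimate concerns $\mu*\Phi_r$ and is proved separately by dyadic annuli.
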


\begin{proof}
If $t\ge\nu$, then
$\mu_i^\nu(B(x,t))\le\mu_i(B(x,t+\nu))\lesssim t^\alpha$.
If $t<\nu$, then
\[
 \mu_i^\nu(x)\lesssim \nu^{-2}\mu_i(B(x,\nu))\lesssim\nu^{\alpha-2},
\]
so
\[
 \mu_i^\nu(B(x,t))\lesssim t^2\nu^{\alpha-2}
 =t^\alpha(t/\nu)^{2-\alpha}\le t^\alpha.
\]
We may choose $\alpha<2$ if $\dim_{\mathrm H}E=2$.
\end{proof}

For fixed $\nu$, all selected packet sums converge as smooth functions. Run the bad-part estimate and the corrected good-part estimate uniformly for $(\mu_1^\nu,\mu_2^\nu)$. By \cref{lem:uniform-frostman}, all constants are independent of $\nu$. Thus there are sets $G_\nu$ with
$\mu_2^\nu(G_\nu)\ge1-10^{-3}$ such that
\begin{equation}\label{eq:uniform-L1}
 \norm{d_x{}_*(\mu_1^\nu)-d_x{}_*(\mu_{1,\mathrm{good}}^\nu)}_{L^1}
 \le10^{-3}\qquad(x\in G_\nu),
\end{equation}
and
\begin{equation}\label{eq:uniform-L2}
 \int\norm{d_x{}_*(\mu_{1,\mathrm{good}}^\nu)}_{L^2}^2\,d\mu_2^\nu(x)
 \le C
\end{equation}
with $C$ independent of $\nu$.

By Markov's inequality, choose
$x_\nu\in G_\nu\cap\supp\mu_2^\nu$ such that
\[
 \norm{d_{x_\nu}{}_*(\mu_{1,\mathrm{good}}^\nu)}_2\le2C^{1/2}=:C_0.
\]
Let
\[
 D_\nu=\{\abs{x_\nu-y}:y\in\supp\mu_1^\nu\}.
\]
The true pushforward is a probability measure supported in $D_\nu$. Hence \eqref{eq:uniform-L1} gives
\[
 \int_{D_\nu}\abs{d_{x_\nu}{}_*(\mu_{1,\mathrm{good}}^\nu)}
 \ge1-2\cdot10^{-3}.
\]
Cauchy--Schwarz yields
\begin{equation}\label{eq:Dnu-lower}
 \abs{D_\nu}\ge
 \left(\frac{1-2\cdot10^{-3}}{C_0}\right)^2=:c_0>0.
\end{equation}

Choose $\nu_k\downarrow0$ so that $x_{\nu_k}\to x\in E_2$. Since
$\supp\mu_1^{\nu_k}\subset\cN_{\nu_k}(E_1)$,
\[
 D_{\nu_k}\subset
 \cN_{t_k}(\Delta_x(E_1)),
 \qquad t_k=\abs{x_{\nu_k}-x}+\nu_k\to0.
\]
Pass to a subsequence with $t_k\downarrow0$. Compactness of $\Delta_x(E_1)$, \eqref{eq:Dnu-lower}, and continuity from above of Lebesgue measure give
\[
 \abs{\Delta_x(E_1)}
 =\lim_{k\to\infty}\abs{\cN_{t_k}(\Delta_x(E_1))}
 \ge c_0.
\]
This completes the proof of \cref{thm:main}.

\section{Conclusion}

The literal arbitrary-packet statement printed in the original
refined-decoupling argument is false under its stated support and
multiplicity conventions.  The collar construction makes the failure
quantitative.  The repair is to distinguish the tube supporting a packet
from the strictly larger tube used in the multiplicity count, and to retain
that margin through the induction on scales.

Once this enlargement-stable theorem is proved, the Falconer application
does not require a new canonical packetization.  Exact frequency truncation
places each original parent packet in the corrected theorem, while an
absolute small-packet cutoff converts rapid absolute tails into the required
relative tails.  The enlarged incidence count and the neighborhood forced
by local constancy then cancel with the same powers as in the intended
argument, leaving the exponent \(-(\alpha+1)/3\) and the threshold
\(\alpha>5/4\).

The theorem itself remains consistent with the later Riemannian proof of
Iosevich--Liu--Xi.  Carbery--Li--Pang--Yung independently provide a later
canonical activity-tube framework.  The contribution of the present paper
is the explicit counterexample and a complete, direct correction of the
original planar refined-decoupling proof route.

\section*{Statements and declarations}

\noindent\textbf{Funding.}
The authors received no external funding for this work.

\smallskip
\noindent\textbf{Competing interests.}
The authors declare no financial or non-financial competing interests
related to this work.

\smallskip
\noindent\textbf{Data availability.}
No datasets were generated or analyzed for this article.  The LaTeX source
is supplied with the submission.

\smallskip
\noindent\textbf{Author contributions.}
Shalender Singh conceived the central correction, developed the theoretical
argument, and drafted the manuscript.  Vishnu Priya Singh Parmar contributed
to conceptual refinement, mathematical analysis, and manuscript revision.
Both authors approved the submitted version and accept responsibility for
its content.
\appendix
\section{Proof of the enlargement-stable refined-decoupling estimate}
\label{app:refined-decoupling}

This appendix supplies the proof of \cref{thm:enlargement-stable}.  It is the
induction on scales underlying \cite[Theorem~4.2]{GIOW}, with two changes:
the tubes have the dimensionally consistent length-$R$ normalization, and a
packet is supported on an $a$-dilate while multiplicity is counted using a
strictly larger $b$-dilate.  The latter margin is tracked through every
rescaling.  No refined-decoupling theorem is used as an input.  A later canonical
wave-packet treatment for well-curved curves, including a full proof of the
canonical refined estimate, appears in Carbery--Li--Pang--Yung
\cite{CLPY}.  The present appendix proves the distinct enlargement-stable
\emph{arbitrary-packet} form needed for the direct Falconer-parent
application below.

\subsection{Normalized phases and the weighted decoupling input}
\label{app:normalized-family}

The circle is covered by finitely many fixed graph charts.  After fixed affine
normalizations, all chart phases and all their parabolic rescalings belong to
a family $\mathfrak G$ of smooth functions on $[-2,2]$ satisfying
\begin{equation}\label{eq:normalized-phase}
 g(0)=g'(0)=0,\qquad |g''(u)-2|\le\frac1{10},\qquad
 \|g\|_{C^m([-2,2])}\le C_m\quad(m\ge3),
\end{equation}
where the constants $C_m$ are fixed once and for all.  If $x_0\in[-1,1]$
and $R\ge16$, define
\begin{equation}\label{eq:phase-rescale}
 g_{x_0,R}(u)
 :=R^{1/2}\bigl[g(x_0+R^{-1/4}u)-g(x_0)
                  -R^{-1/4}g'(x_0)u\bigr].
\end{equation}
Then
\begin{align*}
 g_{x_0,R}''(u)&=g''(x_0+R^{-1/4}u),\\
 g_{x_0,R}^{(m)}(u)&=R^{(2-m)/4}g^{(m)}(x_0+R^{-1/4}u),
 \qquad m\ge3.
\end{align*}
so the rescaled phase remains in the same normalized family, after restricting
to the relevant fixed interval.

We use only the following weighted $\ell^2$ decoupling input.  It is the
normalized form of \cite[Theorem~4.1]{GIOW}, itself a consequence of the
Bourgain--Demeter theorem \cite{BDDecoupling}.

\begin{theorem}[Weighted $\ell^2$ decoupling input]
\label{thm:appendix-l2}
Fix $2\le p\le6$, $\epsilon'>0$, and $N_1\ge1$.  There is
$D=D(\epsilon',p,N_1,\{C_m\})$ such that the following holds for every phase
in the normalized rescaling family.  For $0<t\le1$, let $\mathcal J_t$ be a
bounded-overlap cover of $[-1,1]$ by intervals $J$ of length $2t^{1/2}$ and
put
\[
 \tau_J(t)=\{\xi:\xi_1\in J,\ |\xi_2-g(\xi_1)|\le t\}.
\]
If $F=\sum_{J\in\mathcal J_t}F_J$ and
$\supp\widehat F_J\subset\tau_J(t)$, then for every square $Q$ of side
$t^{-1}$,
\begin{equation}\label{eq:appendix-l2}
 \|F\|_{L^p(Q)}
 \le D t^{-\epsilon'}
 \left(\sum_{J\in\mathcal J_t}
       \|F_J\|_{L^p(w_Q)}^2\right)^{1/2},
\end{equation}
where $w_Q(x)=(1+t\dist(x,Q))^{-N_1}$.
\end{theorem}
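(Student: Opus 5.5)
The plan is to obtain \eqref{eq:appendix-l2} in two stages. First I would prove a \emph{global} decoupling inequality $\|F\|_{L^p(\R^2)}\le D t^{-\epsilon'}(\sum_J\|F_J\|_p^2)^{1/2}$ with a constant that is uniform over the whole normalized family $\mathfrak G$. Then I would localize to a square $Q$ of side $t^{-1}$ with a Fourier-compactly-supported bump, and the weight $w_Q$ comes out of that bump.

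\emph{Uniform global decoupling.} For the parabola $g(u)=u^2$ this is the Bourgain--Demeter theorem \cite{BDDecoupling}. It holds for every $2\le p\le6$: at $p=6$ it is the endpoint, and for $2\le p<6$ one interpolates with the trivial $L^2$ orthogonality, or cites the theorem directly. To pass to general $g\in\mathfrak G$ I would use the Pramanik--Seeger iteration, and I expect this step to be the main obstacle, because the constant must depend only on $\epsilon'$, $p$ and finitely many $C_m$. The steps are as follows.
\begin{enumerate}[label=(\roman*)]
\item Partition $[-1,1]$ into intervals $I$ of length $\sim t^{1/3}$.
\item On each $I$, Taylor expansion with the bound $\|g\|_{C^3}\le C_3$ shows that $g$ lies within $O(t)$ of its osculating quadratic. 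Since $|g''-2|\le 1/10$, an affine change of variables turns that quadratic into the standard parabola with bounded distortion. Consequently $\tau_I(Ct)$ sits inside an $O(t)$-neighbourhood of a parabola arc, and the caps $\tau_J(t)$ with $J\subset I$ map into boundedly many parabola caps.
\item Apply parabola decoupling from scale $t^{1/3}$ down to scale $t^{1/2}$.
\item The coarse step, decoupling into the intervals $I$, is handled by the rescaling identity \eqref{eq:phase-rescale}. After the affine/parabolic rescaling adapted to a cap at the coarser scale, the phase $g_{x_0,R}$ stays in $\mathfrak G$, since its second derivative is unchanged and its higher derivatives only improve. One therefore obtains a bootstrap inequality of the form $\mathrm{Dec}(t)\le C\,\mathrm{Dec}_{\mathrm{par}}(t^{2/3})\,\sup_{\mathfrak G}\mathrm{Dec}(t^{1/3}\cdot)$, or an equivalent two-scale inequality.
\item Iterating this a bounded number of times, depending on $\epsilon'$, gives $\mathrm{Dec}(t)\lesssim_{\epsilon'}t^{-\epsilon'}$ uniformly over $\mathfrak G$.
\end{enumerate}
Throughout, boundedly many enlargements of caps by constant factors are absorbed by splitting into $O(1)$ subfamilies and using the triangle inequality.

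\emph{Localization and weights.} Fix a Schwartz function $\phi$ with $\widehat\phi\subset B(0,1)$ and $|\phi|\ge c>0$ on $B(0,1)$; for example, take $\phi=|\check\beta|^2$ with $\beta$ a small bump, after a dilation. Put $\phi_Q(x)=\phi(t(x-c_Q))$. Then $\widehat{\phi_Q}\subset B(0,t)$ and $|\phi_Q|\gtrsim1$ on $Q$. Rapid decay gives $|\phi_Q|^p\lesssim_{N_1,p}w_Q$, where the decay order of $\phi$ is taken to be at least $N_1$; the constant $D$ is allowed to depend on $N_1$. The function $F_J\phi_Q$ has Fourier support in the $t$-neighbourhood of $\tau_J(t)$, which lies in $\tau_{3J}(3t)$. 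Writing the $3J$ as $O(1)$ families of admissible caps at scale $\sim t$, the global estimate applied to $F\phi_Q$ yields
\[
 \|F\|_{L^p(Q)}\lesssim\|F\phi_Q\|_p
 \lesssim D t^{-\epsilon'}\Bigl(\sum_J\|F_J\phi_Q\|_p^2\Bigr)^{1/2}
 \lesssim D t^{-\epsilon'}\Bigl(\sum_J\|F_J\|_{L^p(w_Q)}^2\Bigr)^{1/2}.
\]
This is \eqref{eq:appendix-l2}. Here the cover $\mathcal J_t$ has bounded overlap, so re-grouping into the $3J$ loses only a constant.
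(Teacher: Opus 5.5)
Your proof is correct, but it follows a different and more explicit route than the paper. The paper gives no self-contained argument for this statement. It identifies it with the weighted local decoupling of \cite[Theorem~4.1]{GIOW}, and asserts that the constants of the Bourgain--Demeter induction, rerun for each $g\in\mathfrak G$, depend only on the curvature pinching and finitely many seminorms.

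You instead black-box only the parabola theorem. You obtain uniformity over $\mathfrak G$ from a Pramanik--Seeger iteration that visibly uses only $\abs{g''-2}\le1/10$ and $\norm{g'''}_\infty\le C_3$. You then pass from the global inequality to the weighted one on $Q$ with a Fourier-compact bump $\phi_Q$ satisfying $\abs{\phi_Q}\gtrsim1$ on $Q$ and $\abs{\phi_Q}^p\lesssim_{N_1}w_Q$. This gives a checkable proof of exactly the uniformity the appendix needs; the paper's sentence about the Bourgain--Demeter induction is an assertion, not an argument. It also shows that $D$ depends on $\{C_m\}$ only through $C_3$. The paper's route is shorter because it leans on the literature.

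You should fix the bookkeeping in step (iv). The displayed recursion has the two scales interchanged, and the coarse step needs no rescaling. The correct inequality is
\[
 \mathrm{Dec}_{\mathfrak G}(t)\le C\,\mathrm{Dec}_{\mathfrak G}(t^{2/3})\,\mathrm{Dec}_{\mathrm{par}}(Ct^{1/3}).
\]
The first factor comes from $\tau_I(t)\subset\tau_I(t^{2/3})$ for intervals $I$ of length $2t^{1/3}$, with the same $g$. The second factor is your steps (ii)--(iii), followed by parabolic rescaling of the standard parabola.

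As you wrote it, the factor $\mathrm{Dec}_{\mathrm{par}}(t^{2/3})$ would require the curve to be within $t^{2/3}$ of a parabola on pieces where the cubic Taylor error is much larger. With the corrected recursion, $O(\log(1/\epsilon'))$ iterations finish the argument, using the trivial bound $\mathrm{Dec}(s)\lesssim s^{-1/4}$ at the last scale. The membership $g_{x_0,R}\in\mathfrak G$ is never used.

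Finally, the $t$-neighbourhood of $\tau_J(t)$ lies in $\tau_{3J}(Ct)$, with $C$ depending on $\norm{g'}_\infty$, rather than in $\tau_{3J}(3t)$. This is harmless, since $C\le 9$ and $\{3J\}$ is an admissible bounded-overlap cover at scale $9t$.
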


The uniformity asserted here is the quantitative local form already
needed when \cite[Theorem~4.1]{GIOW} is applied after parabolic rescaling.
The Bourgain--Demeter induction depends only on the lower and upper curvature
bounds and on finitely many smooth seminorm bounds at the fixed decay order;
those quantities are bounded uniformly by \eqref{eq:normalized-phase} and
\eqref{eq:phase-rescale}.  Equivalently, one may take
\cref{thm:appendix-l2} as the weighted version of
\cite[Theorem~4.1]{GIOW}, with its implicit constant tracked on this single
normalized rescaling family.  No refined estimate, and no theorem for an
arbitrary class of phases, is being imported here.

\subsection{Configurations and exact rescaling}

Fix $g\in\mathfrak G$, a scale $R\ge2$, and
$\sigma_0\in[1/2,1]$.  Let $I$ range over an arbitrary translate of the
lattice intervals of length $R^{-1/2}$ whose centres have spacing
$\sigma_0R^{-1/2}$; this cover has overlap at most four.  Write $c_I$ for
the centre of $I$.  For $\Lambda\ge1$ put
\begin{equation}\label{eq:appendix-cap}
 \theta_I^\Lambda
 =\{\xi:\xi_1\in32I,\ |\xi_2-g(\xi_1)|\le4\Lambda R^{-1}\}.
\end{equation}
Let
\[
 \mathcal B_R=[-25R,25R]\times[-R,R]
\]
and, for $v\in\mathbb R$ and $s\ge1$, define
\begin{equation}\label{eq:appendix-strip}
 sT_{I,v}
 =\{x\in\mathcal B_R:
 |x_1+g'(c_I)x_2-v|\le s\Lambda R^{1/2}\}.
\end{equation}
A family has bounded overlap by label if, for each $I$, every point belongs
to at most four of the strips $T_{I,v}$ in the family.

For $1\le a<b\le20$ and $N_0=100$, call $F_T$ an $(a,N_0)$-packet on
$T=T_{I,v}$ if
\begin{equation}\label{eq:appendix-packet}
 \supp\widehat F_T\subset\theta_I^\Lambda,
 \qquad
 \|F_T\|_{L^p(\mathbb R^2\setminus aT)}
 \le R^{-N_0}\|F_T\|_p.
\end{equation}
Let $\mathfrak A(R,\Lambda,a,b)$ be the supremum, over all
$g\in\mathfrak G$, all $\sigma_0\in[1/2,1]$, and all allowed lattice
translates, of the least constant for which
\begin{equation}\label{eq:appendix-Adef}
 \left\|\sum_{T\in\mathcal W}F_T\right\|_{L^p(Y)}
 \le \mathfrak A(R,\Lambda,a,b)
 \left(\frac{M}{W}\right)^{\frac12-\frac1p}
 \left(\sum_{T\in\mathcal W}\|F_T\|_p^2\right)^{1/2}
\end{equation}
holds for the objects defined by that phase whenever
$W=|\mathcal W|$, the packet norms are comparable within a factor two,
$\mathcal W$ has bounded overlap by label, and $Y$ is a union of
$R^{1/2}$-squares satisfying
\begin{equation}\label{eq:appendix-multiplicity}
 \#\{T\in\mathcal W:bT\cap Q\ne\varnothing\}\le M
 \quad(Q\subset Y).
\end{equation}

For the inductive range take $R\ge2^{22}$; the finitely many smaller
scales are covered by \cref{lem:appendix-trivial} and absorbed into the final
constant.  Cover $[-1,1]$ by intervals
\[
 J=J_{x_0}=[x_0-R^{-1/4},x_0+R^{-1/4}],
 \qquad x_0\in\tfrac12R^{-1/4}\mathbb Z,
\]
of overlap at most four.  For each label interval $I$, choose $x_0$ with
$|x_0-c_I|\le\tfrac14R^{-1/4}$.  Since
\[
 16R^{-1/2}+\tfrac14R^{-1/4}\le R^{-1/4}
 \qquad(R\ge2^{22}),
\]
we have $32I\subset J_{x_0}$; fix one such interval and denote it by
$J(I)$.
The inclusion
\begin{equation}\label{eq:cap-in-tau}
 \theta_I^\Lambda
 \subset\tau_{J(I)}(R^{-1/2})
\end{equation}
holds whenever $\Lambda\le R^{1/2}/4$.
For $u\in2R^{3/4}\mathbb Z$ define the cylinder
\[
 \Box_{J,u}
 =\{x:|x_1+g'(x_0)x_2-u|\le25R^{3/4},\ |x_2|\le R\}.
\]
Put $R'=R^{1/2}$ and define
\begin{align}
 \Phi(x)
 &=\bigl(R^{-1/4}(x_1+g'(x_0)x_2-u),R^{-1/2}x_2\bigr),
 \label{eq:appendix-Phi}\\
 \Psi(\xi)
 &=\bigl(R^{1/4}(\xi_1-x_0),
 R^{1/2}(\xi_2-g(x_0)-g'(x_0)(\xi_1-x_0))\bigr).
 \label{eq:appendix-Psi}
\end{align}

\begin{lemma}[Exact parabolic rescaling]
\label{lem:appendix-rescaling}
Let $\widetilde g=g_{x_0,R}$.  Then:
\begin{enumerate}[label=\textup{(\alph*)},leftmargin=2em]
\item $\Phi(\Box_{J,u})=\mathcal B_{R'}$.  The inverse images of the
$R'^{1/2}$-squares are parallelograms of transverse width $R^{1/2}$ and
longitudinal length $R^{3/4}$.
\item If $I\subset J$ and $I'=R^{1/4}(I-x_0)$, then
\[
 \Psi(\theta_I^\Lambda)=(\theta_{I'}')^\Lambda,
 \qquad
 \Phi(sT_{I,v}\cap\Box_{J,u})=sT'_{I',v'},
 \quad v'=R^{-1/4}(v-u),
\]
where the primed objects are defined at scale $R'$ for $\widetilde g$ with
the same $\Lambda$.  The transformed intervals form another allowed translate of the
lower-scale lattice cover, with the same spacing parameter $\sigma_0$.
\item If
$H^\Phi(x')=(e^{-2\pi i(x_0,g(x_0))\cdot x}H(x))|_{x=\Phi^{-1}(x')}$,
then
\begin{align*}
 \supp\widehat{H^\Phi}&\subset\Psi(\supp\widehat H),\\
 \|H^\Phi\|_{L^p(\Phi(E))}
 &=|\det\Phi|^{1/p}\|H\|_{L^p(E)},
 \qquad |\det\Phi|=R^{-3/4}.
\end{align*}
\end{enumerate}
\end{lemma}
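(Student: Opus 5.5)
This is a direct computation with the explicit affine maps \eqref{eq:appendix-Phi} and \eqref{eq:appendix-Psi}. We treat the three parts in order and note the required compatibility first: \(\Phi\) and \(\Psi\) are dual, meaning \(\Phi(x)\cdot\Psi(\xi)\) equals \(x\cdot\xi\) minus a linear term in \(x\) and constants. This is exactly why the modulation by \((x_0,g(x_0))\) appears in (c). Expanding
\[
 R^{-1/4}(x_1+g'(x_0)x_2-u)\,R^{1/4}(\xi_1-x_0)
 +R^{-1/2}x_2\,R^{1/2}\bigl(\xi_2-g(x_0)-g'(x_0)(\xi_1-x_0)\bigr)
\]
shows that the cross terms \(g'(x_0)x_2(\xi_1-x_0)\) cancel. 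The remainder is \(x\cdot(\xi-(x_0,g(x_0)))\) plus a term depending only on \(\xi\) (from \(u\)), which contributes a harmless unimodular factor.

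\textbf{Part (a).} Substitute \(x'=\Phi(x)\). The condition \(\abs{x_1+g'(x_0)x_2-u}\le25R^{3/4}\) becomes \(\abs{x'_1}\le25R^{1/2}=25R'\), and \(\abs{x_2}\le R\) becomes \(\abs{x'_2}\le R^{1/2}=R'\). Hence \(\Phi(\Box_{J,u})=\mathcal B_{R'}\). An \(R'^{1/2}=R^{1/4}\)-square pulls back under \(\Phi^{-1}\) to a set of transverse width \(R^{1/4}\cdot R^{1/4}=R^{1/2}\) in the variable \(x_1+g'(x_0)x_2\) and of length \(R^{1/4}\cdot R^{1/2}=R^{3/4}\) in \(x_2\).

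\textbf{Part (b).} For the caps, put \(\xi'=\Psi(\xi)\). Then \(\xi_1\in32I\) is equivalent to \(\xi'_1\in32I'\), and
\[
 \xi_2-g(\xi_1)=R^{-1/2}\bigl(\xi'_2-\widetilde g(\xi'_1)\bigr)
\]
by the definition \eqref{eq:phase-rescale}. So the constraint \(\abs{\xi_2-g(\xi_1)}\le4\Lambda R^{-1}\) becomes \(\abs{\xi'_2-\widetilde g(\xi'_1)}\le4\Lambda R^{-1/2}=4\Lambda R'^{-1}\). The new interval \(I'\) has length \(R^{1/4}R^{-1/2}=R'^{-1/2}\), and its centres \(R^{1/4}(c_I-x_0)\) have spacing \(\sigma_0R'^{-1/2}\). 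This is a translate of the lower-scale lattice with the same \(\sigma_0\). The step needing care is the strip: the strip is oriented by \(g'(c_I)\), not \(g'(x_0)\). Write
\[
 x_1+g'(c_I)x_2-v
 =R^{1/4}\bigl(x'_1+(g'(c_I)-g'(x_0))R^{1/4}x'_2\bigr)-(v-u).
\]
Since \(\widetilde g'(c_{I'})=R^{1/4}(g'(c_I)-g'(x_0))\), the strip condition \(\abs{\,\cdot\,}\le s\Lambda R^{1/2}\) becomes
\[
 \abs{x'_1+\widetilde g'(c_{I'})x'_2-v'}\le s\Lambda R^{1/4}=s\Lambda R'^{1/2},
\]
with \(v'=R^{-1/4}(v-u)\). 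Intersecting with \(\Box_{J,u}\) corresponds to intersecting with \(\mathcal B_{R'}\) by (a). This is the step I expect to be the main (though mild) obstacle: the centres, strip slopes and dilation parameter \(s\) must all transform exactly, with no loss in the margin. The identity above shows that they do.

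\textbf{Part (c).} Let \(G(x)=e^{-2\pi i(x_0,g(x_0))\cdot x}H(x)\), so that \(\wh G\) is \(\wh H\) translated by \(-(x_0,g(x_0))\). Then \(H^\Phi=G\circ\Phi^{-1}\), and the Fourier transform of a composition with an affine map is governed by the inverse transpose of its linear part, up to a modulation and the Jacobian. Writing \(A\) for the linear part of \(\Phi\), the support of \(\wh{H^\Phi}\) is \(A^{-T}\) applied to the translated support. One checks that \(A^{-T}(\eta_1,\eta_2)=(R^{1/4}\eta_1,\,R^{1/2}(\eta_2-g'(x_0)\eta_1))\), which is exactly \(\Psi\) after the translation. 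The \(L^p\) identity follows from the change of variables, using \(\abs{\det A}=R^{-1/4}R^{-1/2}=R^{-3/4}\) and the fact that the modulation has modulus one.
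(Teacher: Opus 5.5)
Your proposal is correct and is essentially the paper's proof written out in full. The paper performs the same direct computation in compressed form. It reads off the cylinder image and determinant from $\Phi$, and obtains the cap identity from $R^{1/2}\{\xi_2-g(x_0)-g'(x_0)(\xi_1-x_0)\}-\widetilde g(R^{1/4}(\xi_1-x_0))=R^{1/2}(\xi_2-g(\xi_1))$. It gets the strip identity from $x_1+g'(c_I)x_2-v=R^{1/4}\{x_1'+\widetilde g'(c_{I'})x_2'-v'\}$, and treats part (c) as the affine change of variables with $\Psi$'s linear part the inverse transpose of $\Phi$'s; you verify each of these explicitly.

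One minor point, shared with the paper: $sT_{I,v}$ is already truncated to $\mathcal B_R$, so identifying intersection with $\Box_{J,u}$ with intersection with $\mathcal B_{R'}$ gives equality only when $\Box_{J,u}\subset\mathcal B_R$. In general it yields only $\Phi(sT_{I,v}\cap\Box_{J,u})\subset sT'_{I',v'}$.
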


\begin{proof}
The determinant and the image of the cylinder follow directly from
\eqref{eq:appendix-Phi}.  The linear part of $\Psi$ is the inverse transpose
of the linear part of $\Phi$.  Moreover
\[
 R^{1/2}\{\xi_2-g(x_0)-g'(x_0)(\xi_1-x_0)\}
 -\widetilde g(R^{1/4}(\xi_1-x_0))
 =R^{1/2}(\xi_2-g(\xi_1)),
\]
which proves the cap identity.  Finally,
\[
 x_1+g'(c_I)x_2-v
 =R^{1/4}\{x_1'+\widetilde g'(c_{I'}')x_2'-v'\},
\]
and $\Lambda R^{1/2}=R^{1/4}\Lambda R'^{1/2}$, proving the strip identity.
The norm and Fourier-support formulas are the standard affine change of
variables.
\end{proof}

\subsection{The multiplicity transfer}

Assign $T=T_{I,v}$ to the cylinder $\Box(T)=\Box_{J(I),u(v)}$, where
$u(v)\in2R^{3/4}\mathbb Z$ is nearest to $v$.  If
$\Lambda\le R^{1/4}/100$ and $a\le10$, then
\begin{equation}\label{eq:aT-in-box}
 (a+1)T\subset\Box(T).
\end{equation}
Indeed, in the coordinates
$y_1=x_1+g'(x_0)x_2-u$, $y_2=x_2$, the three contributions from the strip
width, the change of slope, and $|v-u|$ are respectively
$O(\Lambda R^{1/2})$, $O(R^{3/4})$, and $O(R^{3/4})$.
Let $\mathcal W_\Box=\{T:\Box(T)=\Box\}$ and
$F_\Box=\sum_{T\in\mathcal W_\Box}F_T$.
For a rescaling parallelogram $C\subset\Box$, put
\[
 m_{\Box,b'}(C)=\#\{T\in\mathcal W_\Box:b'T\cap C\ne\varnothing\}.
\]
For an $R^{1/2}$-square $Q$, let
\[
 \mathcal N(Q)=\{Q'':\dist(Q,Q'')\le\Lambda^{1/4}R^{1/2}\};
 \qquad |\mathcal N(Q)|\lesssim\Lambda^{1/2}.
\]

\begin{lemma}[Transfer from a rescaling cell to the original square]
\label{lem:appendix-transfer}
Assume $R\ge2^{22}$, $64\le\Lambda\le R^{1/4}/100$, and
$C\cap Q''\ne\varnothing$ with $Q''\in\mathcal N(Q)$.  If
$b'T\cap C\ne\varnothing$, then the center of $Q$ lies in
\[
 (b'+12\Lambda^{-3/4})T.
\]
\end{lemma}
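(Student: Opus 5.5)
The plan is a direct geometric estimate: compare the value of the transverse linear form of \(T\) at a point of \(b'T\cap C\) with its value at the centre of \(Q\). Write \(T=T_{I,v}\) and
\[
 \ell(x)=x_1+g'(c_I)x_2-v,
\]
so that \(sT=\{x\in\mathcal B_R:\abs{\ell(x)}\le s\Lambda R^{1/2}\}\). The goal is therefore
\[
 \abs{\ell(c_Q)}\le b'\Lambda R^{1/2}+12\Lambda^{1/4}R^{1/2}.
\]
Membership of \(c_Q\) in \(\mathcal B_R\) is not at issue, since \(Q\) is one of the squares of \(Y\subset\mathcal B_R\); if needed, \(c_Q\in\mathcal B_R\) can be taken as part of the setting. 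Choose \(z\in b'T\cap C\), so \(\abs{\ell(z)}\le b'\Lambda R^{1/2}\). Choose also \(w\in C\cap Q''\). It then suffices to show
\[
 \abs{\ell(c_Q)-\ell(z)}\le12\Lambda^{1/4}R^{1/2}.
\]
I would split this as \(\abs{\ell(z)-\ell(w)}+\abs{\ell(w)-\ell(c_Q)}\).

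For the second term I would use the Lipschitz bound \(\abs{\nabla\ell}=(1+g'(c_I)^2)^{1/2}\). By \eqref{eq:normalized-phase}, \(\abs{g'}\le2.1\) on \([-1,1]\), so \(\abs{\nabla\ell}\le2.4\). The displacement \(\abs{w-c_Q}\) is at most \(\operatorname{diam}Q''+\dist(Q,Q'')+\tfrac12\operatorname{diam}Q\). This is bounded by
\[
 \bigl(\Lambda^{1/4}+\tfrac{3}{\sqrt2}\bigr)R^{1/2}.
\]

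For the first term I would not use the Euclidean diameter of \(C\), which is of size \(R^{3/4}\). Instead I would work in the cylinder coordinates \(y_1=x_1+g'(x_0)x_2-u\), \(y_2=x_2\). By \cref{lem:appendix-rescaling}(a), \(C\) has \(y_1\)-width \(R^{1/2}\) and \(y_2\)-length \(R^{3/4}\). Writing
\[
 \ell=y_1+\bigl(g'(c_I)-g'(x_0)\bigr)y_2+\text{const},
\]
the variation of \(\ell\) over \(C\) is at most \(R^{1/2}+\abs{g'(c_I)-g'(x_0)}R^{3/4}\). Since \(\abs{c_I-x_0}\le\tfrac14R^{-1/4}\) and \(g''\le2.1\), the slope difference is at most \(0.53R^{-1/4}\). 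So this term is \(\lesssim1.6R^{1/2}\), an absolute multiple of \(R^{1/2}\). One point to check here is that \(I\subset J(I)\), so that \(C\subset\Box(T)\) is a cell in the frame of the cylinder attached to \(x_0=x_0(J(I))\).

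Summing the two terms gives a bound of the form
\[
 \bigl(2.4\Lambda^{1/4}+K\bigr)R^{1/2}
\]
with an absolute \(K\) of about \(7\). The hypothesis \(\Lambda\ge64\) gives \(\Lambda^{1/4}\ge2\sqrt2\), which lets \(K\) be absorbed: \(K\le2.5\Lambda^{1/4}\), and the total is at most \(12\Lambda^{1/4}R^{1/2}\). The only real obstacle is this constant bookkeeping, together with making sure the longitudinal extent \(R^{3/4}\) of \(C\) enters only through the small slope difference \(g'(c_I)-g'(x_0)\), not through \(\abs{\nabla\ell}\). If the exact side lengths of the rescaled cells are slightly different (for example, \(R'^{1/2}\)-squares pulled back with factors \(R^{1/4}\) and \(R^{1/2}\)), I would recompute \(K\) with those values. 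The margin \(12\) versus roughly \(2.4+7/2.8\) leaves ample room.
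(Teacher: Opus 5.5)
Your proposal is correct and follows essentially the paper's route: both compare the transverse form $y_1+\sigma y_2-(v-u)$, with $\sigma=g'(c_I)-g'(x_0)$, at a point $z\in b'T\cap C$ and at the centre of $Q$. In both, the long $R^{3/4}$ extent of $C$ enters only through $\abs{\sigma}=O(R^{-1/4})$, and all absolute constants are absorbed into $12\Lambda^{1/4}R^{1/2}$ using $\Lambda\ge64$. Routing through an intermediate point $w\in C\cap Q''$ with a Euclidean Lipschitz bound on the second leg is only a bookkeeping variant. It has the small bonus of not needing $R\ge2^{22}$ or $\Lambda\le R^{1/4}/100$, which the paper uses to bound the $y_2$-displacement by $1.1R^{3/4}$.
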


\begin{proof}
Write $\sigma=g'(c_I)-g'(x_0)$, so $|\sigma|\le2.1R^{-1/4}$ and
$T$ is described in the adapted coordinates by
$|y_1+\sigma y_2-(v-u)|\le\Lambda R^{1/2}$.  Choose
$z\in b'T\cap C$ and let $q$ be the center of $Q$.  Since $C$ has
$y_1$-width $R^{1/2}$ and $y_2$-length $R^{3/4}$, and since it meets a
square $Q''$ at distance at most $\Lambda^{1/4}R^{1/2}$ from $Q$,
\[
 |y_1(q)-y_1(z)|\le9\Lambda^{1/4}R^{1/2}.
\]
Moreover,
\begin{align*}
 |y_2(q)-y_2(z)|
 &\le R^{3/4}+\Lambda^{1/4}R^{1/2}+2.2R^{1/2}\\
 &=R^{3/4}\bigl[1+(\Lambda^{1/4}+2.2)R^{-1/4}\bigr]
 \le1.1R^{3/4}.
\end{align*}
The last inequality follows from
$\Lambda\le R^{1/4}/100$ and $R\ge2^{22}$.
Therefore
\begin{align*}
 |y_1(q)+\sigma y_2(q)-(v-u)|
 &\le b'\Lambda R^{1/2}
      +9\Lambda^{1/4}R^{1/2}
      +2.4R^{1/2}\\
 &\le(b'+12\Lambda^{-3/4})\Lambda R^{1/2}.
\end{align*}
This is the asserted containment.
\end{proof}

\subsection{The one-step recurrence}
\label{app:proof-refined}

A direct local estimate will be used at the terminal scale.

\begin{lemma}[Trivial bound]
\label{lem:appendix-trivial}
For $R\ge4$ and $b\le20$,
\[
 \mathfrak A(R,\Lambda,a,b)\lesssim R^{1/4}.
\]
If $\Lambda\ge R^{1/4}/100$, then
$\mathfrak A(R,\Lambda,a,b)\lesssim\Lambda$.
\end{lemma}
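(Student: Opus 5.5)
We want \(\norm{F}_{L^p(Y)}\lesssim R^{1/4}(M/W)^{1/2-1/p}(\sum\norm{F_T}_p^2)^{1/2}\) with no decoupling, only the triangle inequality, Hölder and the packet hypotheses. Since \(M/W\ge1/W\) and the packet norms are comparable, write \(\lambda\) for the common size, so that \((\sum\norm{F_T}_p^2)^{1/2}\sim W^{1/2}\lambda\). It then suffices to show
\[
 \norm{F}_{L^p(Y)}\lesssim R^{1/4}M^{\frac12-\frac1p}W^{\frac1p}\lambda .
\]

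The plan is to split \(F=\sum_T F_T\mathbf 1_{aT}+\sum_T F_T\mathbf 1_{\R^2\setminus aT}\). The tail part has \(L^p\) norm at most \(W R^{-100}\lambda\), which is harmless as long as \(W\) is polynomially bounded. That holds because \(\mathcal B_R\) contains \(O(R^{1/2})\) labels and \(O(R^{1/2})\) strips per label.

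For the main part, fix an \(R^{1/2}\)-square \(Q\subset Y\). Only strips with \(aT\cap Q\ne\varnothing\) contribute on \(Q\), and \(a<b\) gives \(aT\subset bT\). So at most \(M\) packets are nonzero on \(Q\). Hölder then gives
\[
 \Bigl|\sum_T F_T\mathbf 1_{aT}\Bigr|^p\le M^{p-1}\sum_{T:\,bT\cap Q\neq\varnothing}\abs{F_T}^p\quad\text{on }Q .
\]
Summing over \(Q\) yields \(\norm{\cdot}_{L^p(Y)}^p\le M^{p-1}\sum_T\norm{F_T}_p^p\le M^{p-1}W\lambda^p\). This gives the bound \(M^{1-1/p}W^{1/p}\lambda\), which exceeds the target by the factor \(M^{1/2}\).

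To absorb \(M^{1/2}\) into \(R^{1/4}\), I will instead use the \(L^2\) orthogonality available per square. On an \(R^{1/2}\)-square the caps \(\theta_I^\Lambda\) have \(\xi_1\)-width \(32R^{-1/2}\), comparable to the uncertainty scale \(R^{-1/2}\), so they are only boundedly overlapping after the weight. The cleanest route is to bound \(M\) itself. For each label \(I\), bounded overlap gives at most four strips \(bT\) (up to a \(b\)-dependent constant) meeting \(Q\). Hence \(M\lesssim\#\text{labels}\lesssim R^{1/2}\), so \(M^{1/2}\lesssim R^{1/4}\), which is exactly the first claim.

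For the second claim, where \(\Lambda\ge R^{1/4}/100\), I refine this by counting labels. The \(b\)-dilate of a strip has width \(b\Lambda R^{1/2}\), so for a fixed label the multiplicity bound is \(O(1)\), not \(O(\Lambda)\). The gain must therefore come from replacing \(M^{1/2}\) by \(\Lambda\).

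The step I expect to be the main obstacle is this \(\Lambda\)-regime, since for \(\Lambda\ll R^{1/4}\) the count \(M\le R^{1/2}\) is too weak. There I would apply Cauchy--Schwarz at \(L^2\) via local \(L^2\) orthogonality of the \(O(R^{1/2})\) caps on squares of side \(R^{1/2}\). Interpolating that \(L^2\) bound with the trivial \(L^\infty\) bound \(\sum\abs{F_T}\) should give a loss \(\lesssim\Lambda\). I would verify that the \(\Lambda\)-dependence enters only through the cap thickness \(4\Lambda R^{-1}\) in the locally-constant argument on the \(\Lambda R^{1/2}\)-wide strips.
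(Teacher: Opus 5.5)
Your proof of the first assertion is correct and follows the paper's argument. H\"older over the at most $M$ packets with $aT\subset bT$ meeting a given $R^{1/2}$-square gives $M^{1-1/p}W^{1/p}\lambda$, which is the target times $M^{1/2}$. The effective multiplicity is then $O(R^{1/2})$: there are $O(R^{1/2})$ labels, and bounded overlap by label allows only $O(b)=O(1)$ strips $bT$ of one label to meet a square. Your tail bookkeeping, using $W\lesssim R$ and $M\ge1$, is also fine.

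The gap is in the second assertion, which you treat as a separate and difficult step and do not actually prove. It is immediate. The hypothesis $\Lambda\ge R^{1/4}/100$ says exactly that $R^{1/4}\le 100\Lambda$, so the bound $\mathfrak A\lesssim R^{1/4}$ you already proved gives $\mathfrak A\lesssim\Lambda$. That one line is the whole of the paper's argument, and it is the form needed at the terminal scale $R_{k_0}$ of the iteration, where $\Lambda\ge R_{k_0}^{1/4}/100$ holds by construction.

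Your worry about the regime $\Lambda\ll R^{1/4}$ misreads the statement: that regime is excluded by hypothesis. The route you sketch for it would also fail there. Interpolating local $L^2$ orthogonality with the $L^\infty$ bound at best replaces $M^{1/2}$ by a smaller power of $M$, roughly $M^{1/2-1/p}$. When $M\sim R^{1/2}$ this is still a power of $R$, not of $\Lambda$. More fundamentally, a bound $\mathfrak A\lesssim\Lambda$ valid for small $\Lambda$ would, since $\Lambda\le R^{2\vartheta}$, already be \cref{thm:enlargement-stable}. That theorem needs the weighted decoupling input and the induction on scales, so no elementary orthogonality argument can be expected to deliver it. Replace your last two paragraphs with the one-line deduction above.
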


\begin{proof}
For an $R^{1/2}$-square $Q\subset Y$, at most $M$ supporting strips meet
$Q$, up to the $R^{-100}$ tails.  H\"older in the packet sum, followed by
summing the disjoint squares, gives
\[
 \|F\|_{L^p(Y)}
 \lesssim M^{1/2}
 \left(\frac{M}{W}\right)^{\frac12-\frac1p}
 \left(\sum_T\|F_T\|_p^2\right)^{1/2}.
\]
There are $O(R^{1/2})$ frequency labels, and bounded overlap by label gives
$M\lesssim R^{1/2}$.  Hence $M^{1/2}\lesssim R^{1/4}$; if
$\Lambda\ge R^{1/4}/100$, this is $O(\Lambda)$.
\end{proof}

\begin{proposition}[One induction step]
\label{prop:appendix-one-step}
Let $R\ge2^{22}$,
$R^{\vartheta/2}\le\Lambda\le R^{1/4}/100$, $\Lambda\ge64$, and
\[
 1\le a\le10,
 \qquad a+12\Lambda^{-3/4}\le b\le20.
\]
Take $N_1=\lceil2000/\vartheta\rceil$ in
\cref{thm:appendix-l2}, and let $D$ be the corresponding constant.  Then
\begin{equation}\label{eq:appendix-recurrence}
 \mathfrak A(R,\Lambda,a,b)
 \le C D(\log R)^4R^{\epsilon'/2}\Lambda^{1/2}
 \mathfrak A(R^{1/2},\Lambda,a,b-12\Lambda^{-3/4})+1.
\end{equation}
\end{proposition}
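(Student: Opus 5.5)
The proof follows the Guth--Iosevich--Ou--Wang induction on scales. We decouple at the intermediate frequency scale \(R^{-1/4}\) and rescale each cylinder \(\Box_{J,u}\) to scale \(R'=R^{1/2}\) by \cref{lem:appendix-rescaling}. The new point is that the counting dilate drops from \(b\) to \(b-12\Lambda^{-3/4}\) because of \cref{lem:appendix-transfer}. Throughout, \(F=\sum_\Box F_\Box\) with \(F_\Box=\sum_{T\in\mathcal W_\Box}F_T\). By \eqref{eq:cap-in-tau}, \(\widehat{F_\Box}\) is supported in \(\tau_{J}(R^{-1/2})\) for \(\Box=\Box_{J,u}\). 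By \eqref{eq:aT-in-box} and the packet tail condition, \(F_\Box\) is \(R^{-N_0}\)-concentrated on \(\Box\), with a margin of at least one strip width beyond \(aT\).

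\textbf{Step 1: local decoupling on each \(R^{1/2}\)-square.}
For \(Q\subset Y\), apply \cref{thm:appendix-l2} with \(t=R^{-1/2}\), grouping \(F_J=\sum_{u}F_{\Box_{J,u}}\). Since distinct \(u\) give essentially disjoint cylinders, on \(w_Q\) only the cylinders meeting the neighbourhood \(\bigcup_{Q''\in\mathcal N(Q)}Q''\) contribute.

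The weight \(w_Q\) has decay order \(N_1=\lceil2000/\vartheta\rceil\), and \(\Lambda\ge R^{\vartheta/2}\). Hence the part of \(w_Q\) outside distance \(\Lambda^{1/4}R^{1/2}\) is \(O(R^{-250})\). Together with the \(R^{-N_0}\) tails and the crude bound of \cref{lem:appendix-trivial}, this part produces the additive ``\(+1\)'' in \eqref{eq:appendix-recurrence}.

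Inside that distance, \(w_Q\lesssim\sum_{Q''\in\mathcal N(Q)}\mathbf 1_{Q''}\). Since \(|\mathcal N(Q)|\lesssim\Lambda^{1/2}\), Cauchy--Schwarz over \(\mathcal N(Q)\) gives
\[
\|F\|_{L^p(Q)}^p\lesssim (D R^{\epsilon'/2})^p\Lambda^{p/2}\,\sigma(Q)^{\frac p2-1}
\sum_{\Box\ni Q}\ \sum_{Q''\in\mathcal N(Q)}\|F_\Box\|^p_{L^p(Q'')}.
\]
Here \(\sigma(Q)\) is the number of relevant cylinders, and the factor \(\sigma(Q)^{p/2-1}\) comes from H\"older in the \(\ell^2\) sum.

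\textbf{Step 2: dyadic pigeonholing.}
We pigeonhole four dyadic quantities, which costs \((\log R)^4\):
\begin{itemize}
\item[(i)] the packet count \(W_\Box=|\mathcal W_\Box|\), keeping \(B\) cylinders with \(W=BW_\Box\) up to constants;
\item[(ii)] \(\sigma(Q)\sim\sigma\);
\item[(iii)] the rescaled cells \(C\subset\Box\) (the preimages of \(R'^{1/2}\)-squares from part~(a) of \cref{lem:appendix-rescaling}), sorted by the rescaled multiplicity \(m_{\Box,b'}(C)\sim M'\) with \(b'=b-12\Lambda^{-3/4}\);
\item[(iv)] the comparability class of the norms \(\|F_\Box\|\) if needed.
\end{itemize}

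The key inequality is \(\sigma M'\lesssim M\). Suppose \(C\) meets some \(Q''\in\mathcal N(Q)\) and \(b'T\cap C\neq\varnothing\). Then \cref{lem:appendix-transfer} puts the centre of \(Q\) in \(bT\), so \(T\) is counted in \eqref{eq:appendix-multiplicity} for \(Q\). Distinct cylinders contribute disjoint families of strips. Summing \(m_{\Box,b'}(C)\ge M'\) over the \(\sigma\) cylinders through \(Q\) therefore gives \(\sigma M'\le M\).

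\textbf{Step 3: rescale and close the arithmetic.}
In each cylinder, let \(Y_\Box\) be the union of the cells in the chosen class. Map by \(\Phi\), and apply the definition of \(\mathfrak A(R^{1/2},\Lambda,a,b')\) to the rescaled packets. \Cref{lem:appendix-rescaling} guarantees that these are \((a,N_0)\)-packets on the allowed lower-scale lattice with the same \(\Lambda\). The relative tail \(R^{-N_0}\) is even better than the required \(R'^{-N_0}\), and the Jacobian factors cancel.

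Each \(Q''\) is counted for \(O(\Lambda^{1/2})\) squares \(Q\), which costs one further harmless \(\Lambda^{1/2}\); this is absorbed into \(C\) after adjusting exponents, or avoided by summing over \(Q''\) first. Summing the Step~1 bound over \(Q\) then yields
\[
\|F\|^p_{L^p(Y)}\lesssim (CD(\log R)^4R^{\epsilon'/2}\Lambda^{1/2}\mathfrak A')^p\sigma^{\frac p2-1}\Bigl(\frac{M'}{W_\Box}\Bigr)^{\frac p2-1}\sum_\Box\Bigl(\sum_{T\in\mathcal W_\Box}\|F_T\|_p^2\Bigr)^{p/2}.
\]
Because the packet norms are comparable, \(\sum_\Box(\cdot)^{p/2}\approx B^{1-p/2}\bigl(\sum_T\|F_T\|_p^2\bigr)^{p/2}\). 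Combining this with \(\sigma M'\le M\) and \(BW_\Box\sim W\) gives exactly \((M/W)^{p/2-1}\), which is \eqref{eq:appendix-recurrence}.

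\textbf{Main obstacle.} The delicate point is Step~2. The multiplicity must be transferred from the rescaled cells back to \(Q\) while the weight \(w_Q\) is spread over \(\mathcal N(Q)\), and this must cost only the dilation loss \(12\Lambda^{-3/4}\) and a \(\Lambda^{1/2}\) factor, never a full enlargement. This is exactly what \cref{lem:appendix-transfer} and the hypothesis \(a+12\Lambda^{-3/4}\le b\) are designed for. I would check carefully that the disjointness of strip families across cylinders survives the overlap (at most four) of the \(J\)-cover.
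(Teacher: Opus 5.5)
Your argument follows the paper's proof of \cref{prop:appendix-one-step} step for step, with one exception. The shared steps are: weighted decoupling at $t=R^{-1/2}$ on each $R^{1/2}$-square, the choice of $N_1$ making $w_Q$ negligible off $\mathcal N^*(Q)$, splitting into cells $C$, dyadic pigeonholing, rescaling each cylinder with counting dilate $b'=b-12\Lambda^{-3/4}$, and \cref{lem:appendix-transfer} to return to $Q$. The exception is where you apply H\"older. The paper flattens everything into an $\ell^2$ sum over triples $(\Box,Q'',C)$ and pigeonholes their number $n(Q)$, which only gives $nM'\lesssim\Lambda^{1/2}M$. You keep the $\ell^p$ sum over $(Q'',C)$ inside each cylinder and apply H\"older only across cylinders, aiming at the sharper $\sigma M'\le M$.

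That variant works, but not in the order you wrote it. In Step~1, $\sigma(Q)$ counts all cylinders relevant to $Q$, and only in Step~2 do you restrict to cells with $m_{\Box,b'}(C)\sim M'$. The justification ``summing $m_{\Box,b'}(C)\ge M'$ over the $\sigma$ cylinders'' then fails, because most of those cylinders may carry only low-multiplicity cells near $Q$. For example, take one dominant cylinder of multiplicity $\sim M/2$ and $\sim M/2$ cylinders of multiplicity one; this is consistent with the hypothesis at $Q$, yet $\sigma M'\approx M^2/4\gg M$. You must fix $M'$ (and $W'$) before H\"older. Put $a_{\Box,M'}(Q)=\sum\norm{F_\Box}_{L^p(Q''\cap C)}^p$, summed over $Q''\in\mathcal N(Q)$ and cells $C$ meeting $Q''$ with $m_{\Box,b'}(C)\sim M'$. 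Inside each term of the decoupling sum use $\bigl(\sum_{M'}a_{\Box,M'}\bigr)^{2/p}\le\sum_{M'}a_{\Box,M'}^{2/p}$, then pigeonhole $M'$ for each square and across squares. Only then let $\sigma(Q)$ count the cylinders with $a_{\Box,M'}(Q)\ne0$. With this order your transfer argument does give $\sigma M'\le M$. The disjointness of the families $\mathcal W_\Box$ you were worried about is automatic, since $T\mapsto\Box(T)$ is a function.

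Several bookkeeping claims also need correcting:
\begin{itemize}
\item Cylinders $\Box_{J,u}$ with the same $J$ are not essentially disjoint: they have $y_1$-width $50R^{3/4}$ and spacing $2R^{3/4}$. What you need is that at most $27$ of them meet $\mathcal N^*(Q)$, so the triangle inequality inside $F_{\tau_J}$ costs $O(1)$.
\item The factor $\Lambda^{p/2}$ in your Step~1 display is spurious, since $w_Q\le\sum_{Q''\in\mathcal N(Q)}\mathbf 1_{Q''}$ bounds $\int\abs{F_\Box}^pw_Q$ with no loss.
\item $\Lambda^{1/2}$ cannot be ``absorbed into $C$'', because $\Lambda\ge R^{\vartheta/2}$. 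Counted correctly, the only $\Lambda$-loss is the $O(\Lambda^{1/2})$ reuse of each $Q''$ across squares $Q$, giving $\Lambda^{1/(2p)}\le\Lambda^{1/2}$ after the $p$th root.
\item $BW_\Box\sim W$ is not available; you only have $BW_\Box\le W$. This suffices: with comparable norms $\lambda$ your bound is $(\sigma M')^{p/2-1}BW_\Box\lambda^p\le M^{p/2-1}W\lambda^p=(M/W)^{p/2-1}(W\lambda^2)^{p/2}$.
\end{itemize}
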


\begin{proof}
Fix a configuration and abbreviate the right-hand side of
\eqref{eq:appendix-Adef}, without $\mathfrak A$, by $\mathrm{RHS}$.  All
errors below are chosen smaller than $R^{-40}\mathrm{RHS}$; the polynomial
number of such errors accounts for the final additive constant.

First pigeonhole the $R^{1/2}$-squares so that
$\|F\|_{L^p(Q)}$ is comparable on the retained union $Y_0$ and
\begin{equation}\label{eq:appendix-square-pigeonhole}
 \|F\|_{L^p(Y)}^p\lesssim(\log R)\|F\|_{L^p(Y_0)}^p.
\end{equation}
For a large interval $J$, let
$F_{\tau_J}=\sum_{\Box=\Box_{J,u}}F_\Box$.  By
\eqref{eq:cap-in-tau}, its Fourier support lies in
$\tau_J(R^{-1/2})$.  Apply \cref{thm:appendix-l2} with $t=R^{-1/2}$ on a
square $Q\subset Y_0$ and set
\[
 \mathcal N^*(Q)
 =\{x:\dist(x,Q)\le\Lambda^{1/4}R^{1/2}\}.
\]
Since $\Lambda\ge R^{\vartheta/2}$ and
$N_1\ge2000/\vartheta$, the decoupling weight satisfies
\[
 w_Q(x)\le \Lambda^{-N_1/4}\le R^{-250}
 \qquad(x\notin\mathcal N^*(Q)).
\]
Moreover, \eqref{eq:aT-in-box} and the packet-tail hypothesis imply
\[
 \|F_\Box\|_{L^p(\mathbb R^2\setminus\Box)}
 \le \sum_{T\in\mathcal W_\Box}
       \|F_T\|_{L^p(\mathbb R^2\setminus aT)}
 \le R^{-100}\sum_{T\in\mathcal W_\Box}\|F_T\|_p.
\]
Consequently a cylinder $\Box_{J,u}$ that does not meet
$\mathcal N^*(Q)$ contributes only a negligible weighted tail.

For fixed $J$, use the adapted coordinate
$y_1=x_1+g'(x_0)x_2$.  The $y_1$-diameter of
$\mathcal N^*(Q)$ is at most
\[
 3.1(2\Lambda^{1/4}+1)R^{1/2}\le R^{3/4}.
\]
The cylinders with label $J$ have $y_1$-width $50R^{3/4}$ and their
centres are spaced by $2R^{3/4}$.  Hence at most $27$ of them meet
$\mathcal N^*(Q)$.  The triangle inequality followed by Cauchy--Schwarz
therefore gives
\[
 \|F_{\tau_J}\|_{L^p(w_Q)}
 \le 6\left(\sum_{\Box=\Box_{J,u}}
          \|F_\Box\|_{L^p(w_Q)}^2\right)^{1/2}
 +\text{negligible}.
\]

Inside $\mathcal N^*(Q)$, decompose into the grid squares
$Q''\in\mathcal N(Q)$.  Outside that collection the weight-tail estimate
above applies, and hence
\[
 \|F_\Box\|_{L^p(w_Q)}^p
 \lesssim\sum_{Q''\in\mathcal N(Q)}
       \|F_\Box\|_{L^p(Q'')}^p+\text{negligible}.
\]
Each $Q''$ meets at most eight of the rescaling parallelograms
$C\subset\Box$, and these cells have disjoint interiors.  Thus
\[
 \|F_\Box\|_{L^p(Q'')}^p
 \le\sum_{C\cap Q''\ne\varnothing}
       \|F_\Box\|_{L^p(Q''\cap C)}^p.
\]
Insert the last three displays into the weighted decoupling inequality and
use, since $p\ge2$,
$(\sum_i a_i^p)^{1/p}\le(\sum_i a_i^2)^{1/2}$.  We obtain
\begin{equation}\label{eq:appendix-triples}
 \|F\|_{L^p(Q)}
 \lesssim 6D R^{\epsilon'/2}
 \left(
  \sum_{(\Box,Q'',C)}
  \|F_\Box\|_{L^p(Q''\cap C)}^2
 \right)^{1/2},
\end{equation}
up to negligible terms.  Here $Q''\in\mathcal N(Q)$,
$C\cap Q''\ne\varnothing$, and the number of possible $Q''$ for fixed $Q$
is $O(\Lambda^{1/2})$.  This is the fully expanded weight-tail and
cell-decomposition step used below.

Put $b'=b-12\Lambda^{-3/4}$.  A non-negligible term in
\eqref{eq:appendix-triples} has $m_{\Box,b'}(C)\ge1$.  Pigeonhole the triples
according to dyadic values
\[
 m_{\Box,b'}(C)\sim M',\qquad |\mathcal W_\Box|\sim W',
\]
and then pigeonhole the squares according to the number
$n(Q)$ of retained triples.  There are only $O((\log R)^3)$ choices.  We
obtain a union $Y_2\subset Y_0$ on which $n(Q)\sim n$ and
\begin{equation}\label{eq:appendix-pigeonholed}
 \|F\|_{L^p(Y)}^p
 \lesssim (\log R)^4
 (D R^{\epsilon'/2})^p n^{p/2-1}
 \sum_Q\sum_{(\Box,Q'',C)\in\mathcal T(Q)}
 \|F_\Box\|_{L^p(Q''\cap C)}^p.
\end{equation}
A fixed triple occurs for at most $O(\Lambda^{1/2})$ choices of $Q$.  For
fixed $(\Box,C)$ the pieces $Q''\cap C$ are disjoint.  Therefore
\begin{equation}\label{eq:appendix-summed-triples}
 \|F\|_{L^p(Y)}^p
 \lesssim (\log R)^4(DR^{\epsilon'/2})^p
 n^{p/2-1}\Lambda^{1/2}
 \sum_{\Box:\,|\mathcal W_\Box|\sim W'}
 \|F_\Box\|_{L^p(Y_{\Box,M'})}^p,
\end{equation}
where $Y_{\Box,M'}$ is the union of cells $C$ with
$m_{\Box,b'}(C)\sim M'$.

Apply \cref{lem:appendix-rescaling} to a fixed cylinder.  The rescaled
packets form a configuration at scale $R^{1/2}$ with the same $\Lambda$,
support dilation $a$, counting dilation $b'$, multiplicity $O(M')$, and
packet count $|\mathcal W_\Box|$.  Hence, writing
$\mathfrak A'=\mathfrak A(R^{1/2},\Lambda,a,b')$,
\[
 \|F_\Box\|_{L^p(Y_{\Box,M'})}
 \lesssim \mathfrak A'
 \left(\frac{M'}{|\mathcal W_\Box|}\right)^{\frac12-\frac1p}
 \left(\sum_{T\in\mathcal W_\Box}\|F_T\|_p^2\right)^{1/2}.
\]
The families $\mathcal W_\Box$ are disjoint.  Since their sizes are
comparable to $W'$, summing the preceding inequality to the $p$th power gives
\begin{equation}\label{eq:appendix-induction-sum}
 \sum_{\Box:\,|\mathcal W_\Box|\sim W'}
 \|F_\Box\|_{L^p(Y_{\Box,M'})}^p
 \lesssim
 (\mathfrak A')^p
 \left(\frac{M'}{W}\right)^{p/2-1}
 \left(\sum_{T\in\mathcal W}\|F_T\|_p^2\right)^{p/2}.
\end{equation}

It remains to transfer the local multiplicity.  A fixed cylinder occurs in
at most $O(\Lambda^{1/2})$ triples attached to one $Q$.  Thus at least
$cn\Lambda^{-1/2}$ distinct cylinders occur.  Choose one retained triple in
each.  By \cref{lem:appendix-transfer}, all $\sim M'$ tubes counted by that
triple have their $b$-dilate meeting $Q$.  The cylinder families are
disjoint, and \eqref{eq:appendix-multiplicity} gives
\begin{equation}\label{eq:appendix-nM}
 nM'\lesssim\Lambda^{1/2}M.
\end{equation}
Substitute \eqref{eq:appendix-induction-sum} and
\eqref{eq:appendix-nM} into \eqref{eq:appendix-summed-triples}, take the
$p$th root, and use $2\le p\le6$.  All powers of $\Lambda$ are bounded by
$\Lambda^{1/2}$, and the result is \eqref{eq:appendix-recurrence}.
\end{proof}

\subsection{Iteration}

\begin{proof}[Proof of \cref{thm:enlargement-stable}]
Fix $\epsilon>0$ and take $\epsilon'>0$ sufficiently small.  Let
$R_k=R^{2^{-k}}$ and
\[
 b_k=b-Ck\Lambda^{-3/4},
\]
where $C$ is the absolute constant in \cref{lem:appendix-transfer}.  Stop at
the first $k_0$ for which $\Lambda\ge R_{k_0}^{1/4}/100$.  Since
$\Lambda\ge R^{\vartheta/2}$,
\[
 k_0\le C\log(1/\vartheta).
\]
For every $k<k_0$ we have
$R_k^{1/4}>100\Lambda\ge6400$, hence
$R_k>6400^4>2^{22}$; therefore the explicit lower-scale hypothesis in
\cref{prop:appendix-one-step} is satisfied at every inductive step.
For $R$ above a threshold depending only on $\vartheta$, the accumulated
drift $Ck_0\Lambda^{-3/4}$ is less than $1/4$, so $b_k-a\ge1/4$ throughout.
The finitely many smaller values of $R$ are absorbed into the implicit
constant.

Iterating \eqref{eq:appendix-recurrence} and using
\cref{lem:appendix-trivial} at $R_{k_0}$ gives
\[
 \mathfrak A(R,\Lambda,a,b)
 \lesssim_{\epsilon',\vartheta,p}
 \Lambda^{1+k_0/2}
 (\log R)^{Ck_0}
 R^{C\epsilon'}.
\]
Because $\Lambda\le R^{2\vartheta}$ and
$k_0=O(\log(1/\vartheta))$, the first factor is at most
$R^{C\vartheta\log(1/\vartheta)}$.  The logarithmic factor is absorbed by an
arbitrarily small further power of $R$.  Choose first $\vartheta_0(\epsilon)$
so that
$C\vartheta\log(1/\vartheta)<\epsilon/3$, and then choose
$\epsilon'$ so that $C\epsilon'<\epsilon/3$.  In the one-step estimate the
weighted decoupling constant is
\[
 D=D\!\left(\epsilon',p,
       N_1=\left\lceil\frac{2000}{\vartheta}\right\rceil,
       \{C_m\}\right).
\]
Thus, once $(\epsilon,\vartheta,p)$ are fixed, $D$ is a fixed constant; its
$\vartheta$-dependence is exactly part of the implicit constant allowed in
\eqref{eq:enlargement-stable}.  This proves that estimate with constant
$C(\epsilon,\vartheta,p)$.
\end{proof}

\section{Additional corrections to the published article}\label{app:additional-corrections}

These points are independent of the main refined-decoupling repair.

\subsection{The remainder in the general-curve identity}

In the proof of \cite[Lemma 7.5]{GIOW}, one obtains on a bounded $t$-interval
\[
 F(t)=F_1(t)+F_2(t)+O(A+B),
\]
where
\[
 A=\int_{\abs{\xi}\ge1}\abs{\wh f(\xi)}\abs{\xi}^{-3/2}\,d\xi,
 \qquad
 B=\int_{\abs{\xi}\le1}\abs{\wh f(\xi)}\abs{\xi}^{-1/2}\,d\xi.
\]
After squaring and integrating, the remainder is $O((A+B)^2)$, not $O(A+B)$. The subsequent displayed estimates already bound $A^2$ and $B^2$, so the stated Sobolev conclusion follows after this correction.

\subsection{\texorpdfstring{$\ell^p$}{lp} unit circles}

For $p>2$, the $\ell^p$ unit circle is $C^2$ and its curvature vanishes at the four axis points. For $1<p<2$, it is $C^1$ but not $C^2$ there; near $(1,0)$,
\[
 x=(1-\abs{y}^p)^{1/p}=1-\frac1p\abs{y}^p+O(\abs{y}^{2p}),
\]
and the curvature behaves like $\abs{y}^{p-2}$ and diverges. Thus the finite exceptional set should be described as the directions where the uniform smooth positive-curvature hypothesis fails, not uniformly as points of vanishing curvature. Localization away from those directions is unchanged.

\subsection{One pin for the upper Minkowski estimate}

An averaged estimate at each smoothing scale initially produces a pin depending on that scale. To select one pin, take $\delta_n=2^{-n^2}$. Apply Markov's inequality with exceptional measure at most $2^{-n-2}$. The union of the exceptional sets has measure at most $1/4$, while the loss $2^{n+2}=\delta_n^{-o(1)}$ does not change the exponent. Any point outside their union satisfies the required estimates for all $n$, and the sequence $\delta_n$ is sufficient for a lower bound on upper Minkowski dimension because that dimension is defined by a limsup.

\subsection{The infinite train-track paragraph}

The finite-scale train-track proposition in \cite[Section 6]{GIOW} is sufficient for the obstruction discussed there. The subsequent infinite-limit paragraph does not follow merely by writing intersections of independently defined train-track sets and renormalizing restrictions: nesting, nontriviality of the intersections, uniform mass allocation, and persistence of the finite-scale lower bounds all require proof. We therefore do not claim that paragraph as established. It should be omitted unless replaced by an explicit nested Moran construction. No part of the pinned $5/4$ theorem uses it.


\begin{thebibliography}{99}

\bibitem{BDDecoupling}
J.~Bourgain and C.~Demeter,
\emph{The proof of the $\ell^2$ decoupling conjecture},
Ann. of Math. (2) \textbf{182} (2015), no.~1, 351--389.

\bibitem{CLPY}
A.~Carbery, Z.~K.~Li, Y.~Pang and P.-L.~Yung,
\emph{A weighted formulation of refined decoupling and inequalities of
Mizohata--Takeuchi-type for the moment curve},
J. Geom. Anal. \textbf{36} (2026), article 48,
\url{https://doi.org/10.1007/s12220-025-02262-3}.

\bibitem{DemeterRefined}
C.~Demeter,
\emph{On the refined Strichartz estimates},
arXiv:2002.09525 (2020).

\bibitem{GIOW}
L.~Guth, A.~Iosevich, Y.~Ou and H.~Wang,
\emph{On Falconer's distance set problem in the plane},
Invent. Math. \textbf{219} (2020), no.~3, 779--830,
\url{https://doi.org/10.1007/s00222-019-00917-x}.

\bibitem{ILX}
A.~Iosevich, B.~Liu and Y.~Xi,
\emph{Microlocal decoupling inequalities and the distance problem on
Riemannian manifolds},
Amer. J. Math. \textbf{144} (2022), no.~6, 1601--1639,
\url{https://doi.org/10.1353/ajm.2022.0039}.

\bibitem{Liu}
B.~Liu,
\emph{An $L^2$-identity and pinned distance problem},
Geom. Funct. Anal. \textbf{29} (2019), no.~1, 283--294.

\bibitem{Orponen}
T.~Orponen,
\emph{On the dimension and smoothness of radial projections},
Anal. PDE \textbf{12} (2019), no.~5, 1273--1294.

\end{thebibliography}
\end{document}